\theoremstyle{plain}
\newtheorem{thm}{Theorem}[section]
\newtheorem*{main}{Main~Theorem}
\newtheorem{remark}{Remark}  
\newtheorem{corollary}[thm]{Corollary}
\newtheorem{lemma}[thm]{Lemma}
\renewcommand{\div}{\operatorname{div}}
\newcommand{\trace}{\operatorname{tr}}
\newcommand{\D}{\displaystyle}
\newcommand{\ip}[2]{\ensuremath{\langle #1 , #2 \rangle}}
\newcommand{\tp}{\texttt{p}}
\newcommand{\ri}{\ensuremath{\rho_i (x_1,x_2,\ldots,x_{i-1})}}
\newcommand{\rj}{\ensuremath{\rho_j (x_1,x_2,\ldots,x_{j-1})}}
\theoremstyle{definition}
\newtheorem{definition}{Definition}
\theoremstyle{remark}
\numberwithin{equation}{section}
\begin{document}
\title{The $\infty(x)$-equation in Grushin-type Spaces}
\author{Thomas Bieske}
\address{Department of Mathematics \\
University of South Florida\\
Tampa, FL 33620, USA} 
\email{tbieske@mail.usf.edu}
\date{September 11, 2015}
\subjclass[2010]{35H20, 53C17,  49L25, 17B70}
\keywords{Viscosity solutions, Grushin-type spaces, Infinite Laplacian}
\begin{abstract}
We employ Grushin jets which are adapted to the geometry of Grushin-type spaces to obtain the existence-uniqueness of viscosity solutions to the $\infty(x)$-Laplace equation in Grushin-type spaces. Due to the differences between Euclidean jets and Grushin jets, the Euclidean method of proof is not valid in this environment. 
\end{abstract}

\maketitle

\section{Introduction}  
Recently, the $\tp(x)$-Laplace equation and its limit equation, the $\infty(x)$-Laplace equation, have been the focus of much attention as a tool for exploring applications such as image restoration \cite{CLR} and electrorheological fluid flow \cite{R}. Linqvist and Luukari \cite{LL} recently proved existence-uniqueness of viscosity solutions to the $\infty(x)$-Laplace equation in (Euclidean) $\mathbb{R}^n$.  However, this proof is not valid in general Carnot- \\ Carath\'{e}odory spaces, such as Grushin-type spaces, because it relies on two important Euclidean properties, namely that the so-called viscosity penalty function is the square of the intrinsic distance and that the two first-order jet elements derived from the penalty function are equal. (These two phenomena are discussed more below.) The main result of this paper is that the lack of these phenomena in Grushin-type spaces can be overcome to produce existence-uniqueness of viscosity solutions in this environment. In particular, we prove the following theorem:
\begin{main}
    Let $\Omega$ be a bounded domain in the Grushin-type space $G_n$ and let $f:\partial \Omega \to \mathbb{R}$ be a (Grushin) Lipschitz function.
    Then the Dirichlet problem 
    \begin{eqnarray*}
\left\{ \begin{array}{cc}
-\Delta_{\infty(x)} u  =  0  & \textmd{in}\  \Omega \\
u = f & \textmd{on}\ \partial \Omega
\end{array} \right.
\end{eqnarray*}
has a unique viscosity solution $u$. 
\end{main}

In Section 2, we review the geometry of Grushin-type spaces and definitions of various viscosity solutions. Section 3 collects all the Grushin tools we will be using in our proof of existence-uniqueness, found in Section 4. Section 5 details some further properties of the viscosity solutions. 

\section{Grushin-type spaces}
\subsection{The Environment}
We begin by constructing the Grushin-type spaces.  We consider $\mathbb{R}^n$
with
coordinates $(x_1,x_2,\ldots,x_n)$ and the vector fields 
$$X_i = \ri \frac{\partial}{\partial x_i}$$ for $i=2,3,\ldots, n$ where $\ri$ is a  (possibly constant) polynomial.  We decree that $\rho_1 \equiv 1$ so that
$$X_1=\frac{\partial}{\partial x_1}.$$
A quick calculation shows that when $i < j$, the Lie bracket is given by
\begin{equation}\label{bracket}
X_{ij} \equiv [X_i,X_j]= \ri \frac{\partial \rj}{\partial x_i} 
\frac{\partial }{\partial x_j}.
\end{equation}
Because the $\rho_i$'s are
polynomials, at each point there is a finite number of iterations of the
Lie bracket so that $\frac{\partial}{\partial x_i}$ has a non-zero coefficient.
This is easily seen for $X_1$ and $X_2$, and the result is obtained
inductively for $X_i$.  (It is noted that the number of iterations
necessary is a function of the point.)  
Thus, H\"{o}rmander's condition is satisfied by these
 vector fields.  
Endowing $\mathbb{R}^n$ with an inner product (singular where the polynomials vanish) so
that the $X_i's$ are orthonormal produces a manifold that we shall call $g_n$.  This
is the tangent space to a  generalized Grushin-type space $G_n$.  Points in $G_n$
will also be denoted by $x=(x_1,x_2,\ldots, x_n)$ with a fixed point
denoted
$x_0=(x^0_1,x^0_2,\ldots,x^0_n)$.  

Even though $G_n$ is not a group, it is a metric space with the natural metric
being the Carnot-Carath\'{e}odory distance, which
is defined for the points $x$ and $y$ as follows:
 \begin{equation*}
d_C(x,y)= \inf_{\Gamma} \int_{0}^{1} \| \gamma '(t) \| dt 
\end{equation*}
\noindent  where the set $ \Gamma $
is the set of all  curves $ \gamma $ such that $ \gamma (0) = x,
 \gamma (1) = y $ and $\gamma '(t) $ is in
span$\{\{X_i(\gamma(t))\}_{i=1}^n\}$. By Chow's theorem (see, for example,
\cite{BR:SRG}) any two points
 can be connected by such a curve, which means $ d_C(x,y) $ is an honest
metric.  Using this metric, we can define a
Carnot-Carath\'{e}odory ball of radius $r$ centered at a point $x_0$ by
$$B_r=B(x_0,r)=\{p\in G_n : d_C(x,x_0) < r\}$$ and similarly, we shall denote a
bounded domain in $G_n$ by $\Omega$.
The Carnot-Carath\'{e}odory metric behaves differently when the
polynomials $\ri$ vanish.  Fixing a point $x_0$, consider the  $n$-tuple
$r_{x_0}=(r^1_{x_0},r^2_{x_0},\ldots,r^n_{x_0})$ where $r^i_{x_0}$ is the minimal length of the 
 Lie bracket iteration 
 required to produce $$[X_{j_1},[X_{j_2},[\cdots[X_{j_{r^i_{x_0}}},X_i]\cdots](x_0) \neq 0.$$ 
 Note that even though the minimal length is unique, the iteration used to obtain that minimum is 
 not unique.  Note also that $$\rho_i(x_1^0, x_2^0, x_3^0,\ldots, x_{i-1}^0)
 \neq 0 \leftrightarrow r^i_{x_0}=0.$$
Using Theorem 7.34 from \cite{BR:SRG} we obtain the local
estimate at $x_0$
\begin{equation} \label{distest}
d_C(x_0,x)  \sim  \sum _{i=1}^n |x_i-x_i^0|^\frac{1}{1+r^i_{x_0}} . 
\end{equation}
 
Given a smooth function $f$ on $G_n$, we define the horizontal gradient of $f$ as 
$$\nabla_0f(x) = (X_1f(x),X_2f(x),\ldots, X_nf(x))$$
and the symmetrized second order (horizontal)  derivative matrix by 
$$((D^2f(x))^{\star})_{ij} = \frac{1}{2} (X_iX_jf(x)+X_jX_if(x))$$ 
for $i,j=1,2,\ldots n$.  We can then define function spaces $C^k$ and the Sobolev spaces $W^{1,\tp}$, etc with respect to these vector fields in the usual way. 

We may also define the $\infty$-Laplace operator
\begin{equation*}
\Delta_{\infty} u  =
 \ip{((D^2u(x))^{\star}) \nabla_0 u}{\nabla_0 u}.
\end{equation*}
This operator is the ``limit" operator of the $\tp$-Laplace operator (for $2< p<\infty$), which is given by 
\begin{eqnarray*}
\Delta_{\tp} u & = &
 \|\nabla_0 u\|^{\tp-2}
\Delta u +(\tp-2) \|\nabla_0 u\|^{\tp-4}
 \Delta_{\infty} u\\
& = &\div_{G} \ (\|\nabla_0 u\|^{\tp-2} \nabla_0 u)
\end{eqnarray*}
where the divergence is taken with respect to the Grushin vector fields. 

Following \cite{LL}, we generalize these operators by replacing the constant $\tp$ with an appropriate function $\tp(x)\in C^1\cap W^{1,\infty}$ and scalar $k>1$ to obtain the $\tp(x)$-Laplace operator
\begin{eqnarray*}
\Delta_{\tp(x)} u & = &
 \|\nabla_0 u\|^{k\tp(x)-2}
\Delta u +(k\tp(x)-2) \|\nabla_0 u\|^{k\tp(x)-4}
 \Delta_{\infty} u\\
 & & \mbox{}+\|\nabla_0 u\|^{k\tp(x)-2}\ip{\nabla_0 u}{\nabla_0k\tp(x)}\ln \|\nabla_0 u\| \\
& = &\div_{G} \ (\|\nabla_0 u\|^{k\tp(x)-2} \nabla_0 u).\end{eqnarray*}

The corresponding equation $\Delta_{\tp(x)} u=0$ is the Euler-Lagrange equation associated to the energy functional $$\Bigg(\int_\Omega \frac{\|\nabla_0 u\|^{k\tp(x)}}{k\tp(x)}\;dx\Bigg)^\frac{1}{k}_.$$ Allowing $k\to\infty$, one has the tool for analysis of the extremal problem $$\min_u\max_x \|\nabla_0 u\|^{\tp(x)}.$$

Letting $k\to\infty$, we have $\Delta_{\tp(x)} u \to \Delta_{\infty(x)} u$ where 
\begin{equation*}
\Delta_{\infty(x)} u =
\Delta_{\infty} u + \|\nabla_0 u\|^{2}\ip{\nabla_0 u}{\nabla_0\ln\tp(x)}\ln \|\nabla_0 u\|.
\end{equation*}  
For more details concerning the geometry of Grushin-type spaces, the interested reader is directed to \cite{B:GC, CC} and the references therein.

\subsection{Viscosity Solutions}
Because we will be considering viscosity solutions, we will recall the main definitions and properties. We begin with the Grushin jets $J^{2,+}$ and $J^{2,-}$. 
(See \cite{B:GS,B:GC} for a more complete analysis of such jets.) 
\begin{definition}
Let $u$ be an upper semi-continuous function. Consider the set
\begin{eqnarray*}
K_{\mathfrak{X}}^{2,+}u(x)  = \bigg\{
\varphi \in C^2\ \textmd{in a neighborhood of}\ x,  \varphi(x) = u(x),\\
\varphi(y) \geq u(y), \ y\neq x \ \text{in a neighborhood of}\ x\bigg\}.
\end{eqnarray*}
Each function $\varphi\in K_{\mathfrak{X}}^{2,+}u(x)$ determines a vector-matrix pair $(\eta,X)$ via the relations
\begin{equation*}\begin{array}{rcl}
\eta&  = & \big(X_1\varphi(x),X_2\varphi(x),\ldots, X_n\varphi(x)\big)\\
 X_{ij}&=& \frac{1}{2} \big(X_i(X_j(\varphi))(x)+X_j(X_i(\varphi))(x)\big).
\end{array}
\end{equation*}
We then define the \emph{second order superjet of $u$ at $x$} by $$J^{2,+}u(x)=\{(\eta,X):\varphi \in K^{2,+}u(x)\},$$
the \emph{second order subjet of $u$ at $x$} by $$J^{2,-}u(x)=-J^{2,+}(-u)(x)$$ and the set-theoretic closure 
\begin{eqnarray*}
\overline{J}^{2,+}u(x)=\{(\eta,X): & \exists \{x_n,\eta_n,X_n\}_{n\in \mathbb{N}}\ \textmd{with}\ (\eta_n,X_n)\in J^{2,+}u(x_n)\\ & \textmd{and}\ (x_n,u(x_n),\eta_n,X_n)\to(x,u(x),\eta,X)\}.
\end{eqnarray*}
There is an analogous definition for $\overline{J}^{2,-}v(x)$. 
\end{definition}

We then use these Grushin jets to define viscosity $\infty(x)$-harmonic functions as follows:
\begin{definition}
A lower semi-continuous function  $v$  
  is \textbf{viscosity $\infty(x)$-superharmonic} in a bounded domain $\Omega$ if $v \not \equiv \infty$ in each component of $\Omega$ and for all $x_0 \in \Omega$,
whenever $(\xi, \mathcal{Y}) \in \overline{J}^{2,-} v(x_0)$, we have
$$-\Big(\ip{\mathcal{Y}\xi}{\xi} + \|\xi\|^{2}\ip{\xi}{\nabla_0\ln\tp(x)}\ln \|\xi\| \Big)\geq 0.$$
An upper semi-continuous function  $u$  
  is  \textbf{viscosity $\infty(x)$-subharmonic} in a bounded domain $\Omega$ if $u \not \equiv -\infty$ in each component of $\Omega$ and for all $x_0 \in \Omega$,
whenever $(\eta,  \mathcal{X}) \in \overline{J}^{2,+} u(x_0)$, we have
$$-\Big(\ip{\mathcal{X}\eta}{\eta} + \|\eta\|^{2}\ip{\eta}{\nabla_0\ln\tp(x)}\ln \|\eta\| \Big)\leq 0.$$
A function is \textbf{viscosity $\infty(x)$-harmonic} if it is both viscosity $\infty(x)$-subharmonic and viscosity $\infty(x)$-superharmonic.
\end{definition}

Similarly, we have the following definition concerning $\Delta_{\tp(x)} u$.
\begin{definition}
A lower semi-continuous function  $v$  
  is \textbf{viscosity $\tp(x)$-superharmonic} in a bounded domain $\Omega$ if $v \not \equiv \infty$ in each component of $\Omega$ and for all $x_0 \in \Omega$,
whenever $(\xi, \mathcal{Y}) \in \overline{J}^{2,-} v(x_0)$, we have
$$-\Big(\|\xi\|^{k\tp(x)-2}\trace \mathcal{Y} +(k\tp(x)-2) \|\xi\|^{k\tp(x)-4}
 \ip{\mathcal{Y}\xi}{\xi}+\|\xi\|^{k\tp(x)-2}\ip{\xi}{\nabla_0k\tp(x)}\ln \|\xi\| \Big)\geq 0.$$

An upper semi-continuous function  $u$ is  \textbf{viscosity $\tp(x)$-subharmonic} in a bounded domain $\Omega$ if $u \not \equiv -\infty$ in each component of $\Omega$ and for all $x_0 \in \Omega$,
whenever $(\eta,  \mathcal{X}) \in \overline{J}^{2,+} u(x_0)$, we have
$$-\Big(\|\eta\|^{k\tp(x)-2}\trace \mathcal{X} +(k\tp(x)-2) \|\eta\|^{k\tp(x)-4}
 \ip{\mathcal{X}\eta}{\eta}+\|\eta\|^{k\tp(x)-2}\ip{\eta}{\nabla_0k\tp(x)}\ln \|\eta\|\Big)\leq 0.$$
A function is \textbf{viscosity $\tp(x)$-harmonic} if it is both viscosity $\tp(x)$-subharmonic and viscosity $\tp(x)$-superharmonic.
\end{definition}
\begin{remark}
In the above definitions, we may replace the right-hand side of each inequality by an arbitrary function. In that case, we use the term viscosity $\infty(x)$-subsolution, etc. 
\end{remark}
\section{Key Grushin Tools and Results}
The Euclidean proof of the Main Theorem relies on two important facts, neither of which hold in Grushin-type spaces. The first is that the square of the intrinsic distance is a valid viscosity ``penalty function". By Equation \eqref{distest}, the square of the Grushin distance may not be sufficiently smooth. This can be rectified by choosing a higher power of the Grushin distance, so that the penalty function is smooth. This choice, however, is at odds with the reason for choosing the square of the distance function: the derivative of the penalty function must be comparable to the intrinsic distance so that we can exploit the Lipschitz property to obtain estimates that are controllable. 

We now present the Grushin tools and results we will need to overcome this issue. 
The first is the Iterated Maximum Principle. 
\begin{lemma}[Iterated Maximum Principle] \cite{B:GC} \label{malpha}
Let $u$ be an upper-semicontinuous function in a domain $\Omega$  and $v$
be a lower-semicontinuous function in $\Omega$.

Let $$\sup_{\overline{\Omega}} (u(x)-v(x))=u(x_0)-v(x_0)>0$$ occur in the interior of $\Omega$. 
Consider a real vector $\vec{\alpha} = (\alpha_1,\alpha_2,\ldots, \alpha_n)$ with non-negative components and the points $x$ and $y$ with coordinates 
$x=(x_1,x_2,\ldots, x_n)$ and  $y=(y_1,y_2,\ldots,y_n)$. We define the following functions for $j=1,2,3,\ldots, n$:
\begin{equation*}
\varphi_{\alpha_j,\alpha_{j+1},\ldots,\alpha_n}(x,y) = \sum_{i=j}^n \frac{1}{2}\alpha_i(x_i-y_i)^2.
\end{equation*}
Using these functions and upper-semicontinuity on a compact set, we can consider the following well-defined functions for $j=1,2,3,\ldots, n$:
\begin{eqnarray*}
\lefteqn{M_{\alpha_j,\alpha_{j+1},\ldots,\alpha_n} =}\\ && \sup_{\overline{\Omega} \times \overline{\Omega}}(u(x)-v(y)-\varphi_{\alpha_j,\alpha_{j+1},\ldots,\alpha_n}(x,y):x_k=y_k \ \textmd{for}\ k=1,2,\ldots j-1) \\
 && =u(x_{\alpha_j,\alpha_{j+1},\ldots,\alpha_n})-v(y_{\alpha_j,\alpha_{j+1},\ldots,\alpha_n})\\
& & \mbox{ }-\varphi_{\alpha_j,\alpha_{j+1},\ldots,
\alpha_n}(x_{\alpha_j,\alpha_{j+1},\ldots,\alpha_n},y_{\alpha_j,\alpha_{j+1},\ldots,\alpha_n}).
\end{eqnarray*}
We then have 
\begin{eqnarray*}
\lim_{\alpha_n \rightarrow \infty}\lim_{\alpha_{n-1} \rightarrow \infty}\cdots 
\lim_{\alpha_2 \rightarrow \infty}\lim_{\alpha_1 \rightarrow \infty} \varphi_{\alpha_1,\alpha_2,\ldots,\alpha_n}(x_{\alpha_1,\alpha_2,\ldots,\alpha_n},y_{\alpha_1,\alpha_2,\ldots,\alpha_n}) = 0\\
\textmd{and}\ \ 
\lim_{\alpha_n \rightarrow \infty}\lim_{\alpha_{n-1} \rightarrow \infty}\cdots \lim_{\alpha_2 \rightarrow \infty}
\lim_{\alpha_1 \rightarrow \infty} M_{\alpha_1,\alpha_2,\ldots,\alpha_n} = u(x_0)-v(x_0).
\end{eqnarray*}
\end{lemma}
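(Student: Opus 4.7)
The approach is to reduce the iterated statement to $n$ successive applications of the classical one-variable penalization lemma (as in Lemma~3.1 of the Crandall--Ishii--Lions User's Guide), peeling off one coordinate at a time starting from the innermost limit.

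For the innermost limit $\alpha_1\to\infty$, with $\alpha_2,\ldots,\alpha_n$ held fixed, I would introduce the auxiliary function
\begin{equation*}
\Psi_{\alpha_2,\ldots,\alpha_n}(x_1,y_1) = \sup\bigl\{u(x) - v(y) - \varphi_{\alpha_2,\ldots,\alpha_n}(x,y) : (x_i)_{i\geq 2},\,(y_i)_{i\geq 2}\bigr\},
\end{equation*}
where the sup is over the compact set of admissible tail coordinates in $\overline\Omega$. The upper semi-continuity of $u$, the lower semi-continuity of $v$, the smoothness of the penalty, and the compactness of $\overline\Omega$ combine to give that $\Psi_{\alpha_2,\ldots,\alpha_n}$ is itself upper semi-continuous and that
\begin{equation*}
M_{\alpha_1,\alpha_2,\ldots,\alpha_n} = \sup_{(x_1,y_1)}\Bigl\{\Psi_{\alpha_2,\ldots,\alpha_n}(x_1,y_1) - \tfrac{1}{2}\alpha_1(x_1-y_1)^2\Bigr\}.
\end{equation*}
The classical one-variable penalization lemma then yields, as $\alpha_1\to\infty$, both $\tfrac{1}{2}\alpha_1(x_1^{\vec\alpha}-y_1^{\vec\alpha})^2\to 0$ and $M_{\alpha_1,\alpha_2,\ldots,\alpha_n}\to \sup_{x_1}\Psi_{\alpha_2,\ldots,\alpha_n}(x_1,x_1)$; unwinding the definitions, the latter is exactly $M_{\alpha_2,\ldots,\alpha_n}$.

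Iterating, I would next peel off the $\alpha_2$-penalty on the constraint set $\{x_1=y_1\}$, then the $\alpha_3$-penalty on $\{x_1=y_1,\ x_2=y_2\}$, and so on. After $n$ stages the cascade of supremums collapses to $\sup_{\overline\Omega}(u-v)=u(x_0)-v(x_0)$, which is the second assertion. Each individual penalty $\tfrac{1}{2}\alpha_i(x_i^{\vec\alpha}-y_i^{\vec\alpha})^2$ is forced to $0$ during its own stage $\alpha_i\to\infty$, and because $M_{\vec\alpha}$ is trapped between $u(x_0)-v(x_0)$ and $\sup u-\inf v$, the unpeeled penalties remain uniformly bounded until it is their turn; summing the stage-by-stage conclusions yields the first assertion.

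The main technical point is the bookkeeping around the auxiliary functions $\Psi$: at each stage one must verify that $\Psi$ is upper semi-continuous and that its supremum is attained on the current constraint set, so that the maximizers $(x^{\vec\alpha},y^{\vec\alpha})$ are well defined and the one-variable lemma applies cleanly. This is standard compactness and semi-continuity checking, and once it is in place the proof is a routine $n$-fold induction on the classical penalization statement.
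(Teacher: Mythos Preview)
The paper does not actually prove this lemma; it is quoted verbatim from \cite{B:GC} and carries that citation in its header. So there is no in-paper proof to compare your proposal against.

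That said, your outline is the natural one and matches the argument in the cited source: one peels off the coordinates one at a time, applying the standard one-parameter penalization lemma (Crandall--Ishii--Lions Lemma~3.1) at each stage, with the key observation that the stage-$j$ diagonal supremum $\sup_{x_j}\Psi_{\alpha_{j+1},\ldots,\alpha_n}(x_j,x_j)$ unwinds to exactly $M_{\alpha_{j+1},\ldots,\alpha_n}$. Your identification of the technical checkpoint---upper semicontinuity of the partial-sup functions $\Psi$ on the appropriate compact slice---is correct, and it is handled by the fact that a sup over a compact parameter set of a jointly upper-semicontinuous function is again upper semicontinuous. One small point you gloss over: to pass the remaining (unpeeled) penalty terms $\sum_{i\geq j+1}\tfrac12\alpha_i(x_i^{\vec\alpha}-y_i^{\vec\alpha})^2$ through the limit $\alpha_j\to\infty$ cleanly, you need the maximizers to converge (along subsequences) to maximizers of the next-stage problem, which is part of the content of the one-variable lemma and is also what underlies Remark~\ref{remarkcomp} in the paper. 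With that in hand, the iterated limit of $\varphi$ reduces stage by stage to $\varphi_{\alpha_{j+1},\ldots,\alpha_n}$ evaluated at the next-stage maximizers, and the induction closes.
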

\begin{corollary}\label{impcor}
Under the hypotheses of Lemma \ref{malpha}, all iterated limits exist and the full limit exists and equals the common value of the iterated limits.  That is, 
\begin{equation*}
\lim_{\alpha_1,\alpha_2,\ldots,\alpha_n \rightarrow \infty}
M_{\alpha_1,\alpha_2,\ldots,\alpha_n} =
\sup_{\overline{\Omega}} (u(x)-v(x))=u(x_0)-v(x_0)
\end{equation*}
and
\begin{equation*}
\lim_{\alpha_1,\alpha_2,\ldots,\alpha_n \rightarrow \infty}
\varphi_{\alpha_1,\alpha_2,\ldots,\alpha_n}
(x_{\alpha_1,\alpha_2,\ldots,\alpha_n},y_{\alpha_1,\alpha_2,\ldots,\alpha_n})=0.
\end{equation*}
\end{corollary}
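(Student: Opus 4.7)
The plan is to exploit a monotonicity property of $M_{\alpha_1,\ldots,\alpha_n}$ together with a semicontinuity-plus-compactness argument for $\varphi_{\alpha_1,\ldots,\alpha_n}(x_\alpha,y_\alpha)$. The crucial structural observation is that $M_{\alpha_1,\ldots,\alpha_n}$ is non-increasing in each $\alpha_i$: enlarging one coefficient of the penalty $\varphi_{\alpha_1,\ldots,\alpha_n}$ can only shrink the supremum defining $M$. A trivial lower bound comes from evaluating the defining supremum at $x=y=x_0$, which gives $M_{\alpha_1,\ldots,\alpha_n}\ge u(x_0)-v(x_0)$ for every choice of $\alpha$.

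For a function of several variables that is non-increasing in each argument and bounded below, the full (unordered) limit as every $\alpha_i\to\infty$ exists and coincides with its infimum, and every iterated limit, taken in any order, also equals that infimum. Lemma \ref{malpha} already pins down one specific iterated limit as $u(x_0)-v(x_0)$, so the infimum equals $u(x_0)-v(x_0)$, and hence every iterated limit, as well as the full limit, equals the common value $u(x_0)-v(x_0)$. This settles the first assertion of the corollary.

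For the $\varphi$-assertion I would start from the identity
\[
\varphi_{\alpha_1,\ldots,\alpha_n}(x_\alpha,y_\alpha) = u(x_\alpha) - v(y_\alpha) - M_{\alpha_1,\ldots,\alpha_n},
\]
where $(x_\alpha,y_\alpha)$ denotes a maximizer. Upper semicontinuity of $u$ and lower semicontinuity of $v$ on the compact set $\overline{\Omega}$ bound $u(x_\alpha)-v(y_\alpha)$ from above, and combining with the uniform lower bound $M_\alpha\ge u(x_0)-v(x_0)$ gives $\varphi_{\alpha}(x_\alpha,y_\alpha)\le C$ for a constant $C$ independent of $\alpha$. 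Since each summand $\tfrac{1}{2}\alpha_i(x_\alpha^i-y_\alpha^i)^2$ is dominated by this $C$, we deduce $|x_\alpha^i-y_\alpha^i|\to 0$ as $\alpha_i\to\infty$. By compactness of $\overline{\Omega}\times\overline{\Omega}$, we may extract a subsequence along which $x_\alpha\to\bar{x}$ and $y_\alpha\to\bar{x}$ for a common point $\bar{x}\in\overline{\Omega}$; semicontinuity then yields $\limsup(u(x_\alpha)-v(y_\alpha))\le u(\bar{x})-v(\bar{x})\le u(x_0)-v(x_0)$. Together with the already-established convergence $M_\alpha\to u(x_0)-v(x_0)$ this forces $\limsup\varphi_{\alpha}(x_\alpha,y_\alpha)\le 0$, and nonnegativity of $\varphi$ upgrades this to a genuine limit of zero.

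The main obstacle is precisely this second assertion: because $\varphi_{\alpha}(x_\alpha,y_\alpha)$ is not itself a monotone function of $\alpha$, its vanishing in the unordered full limit cannot be reduced to the order-theoretic tautology that worked for $M_\alpha$, and one must genuinely invoke semicontinuity and compactness. By contrast, the iterated and full limits of $M_\alpha$ follow automatically once monotonicity has been isolated, with Lemma \ref{malpha} supplying the single numerical identification needed to fix the common value.
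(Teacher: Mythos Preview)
Your argument is correct. The monotonicity of $M_{\alpha_1,\ldots,\alpha_n}$ in each $\alpha_i$, together with the lower bound $M_{\alpha_1,\ldots,\alpha_n}\ge u(x_0)-v(x_0)$ obtained by testing $x=y=x_0$, does force every iterated limit and the full limit of $M$ to coincide with the infimum; Lemma~\ref{malpha} then identifies that infimum as $u(x_0)-v(x_0)$. Your treatment of $\varphi_{\alpha}(x_\alpha,y_\alpha)$ is also sound: the uniform bound $\varphi_\alpha\le C$ forces $|x_\alpha^i-y_\alpha^i|\to 0$ whenever $\alpha_i\to\infty$, and the subsequence/semicontinuity argument combined with $\varphi_\alpha\ge 0$ upgrades $\limsup\varphi_\alpha\le 0$ to a genuine limit. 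To be fully airtight you should phrase the last step as: along any sequence with $\min_i\alpha_i\to\infty$, every subsequence has a further subsequence on which $\varphi_\alpha\to 0$, hence $\varphi_\alpha\to 0$ along the original sequence; this is implicit in what you wrote but worth making explicit.

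As for comparison with the paper: the paper does not supply a proof of Corollary~\ref{impcor} at all. Lemma~\ref{malpha} is quoted from \cite{B:GC}, and the corollary is stated immediately afterward without argument, presumably regarded as routine (or implicitly deferred to \cite{B:GC}). Your monotonicity observation for $M_\alpha$ is almost certainly the intended mechanism, since it is the only structural feature that makes the passage from one specific iterated limit to \emph{all} iterated limits and the full limit automatic. Your compactness/semicontinuity argument for $\varphi_\alpha$ is likewise the standard device (it is essentially the proof of \cite[Lemma~3.1]{CIL} transported to this setting), so while the paper is silent, your write-up is a faithful reconstruction of what any reader would fill in.
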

\begin{remark}\label{remarkcomp}
As a consequence of the Iterated Maximum Principle, its proof, and Corollary \ref{impcor}, if we denote the 
points $x_{\alpha_1,\alpha_2,\ldots,\alpha_n}$ and $y_{\alpha_1,\alpha_2,\ldots,\alpha_n}$ by $x^{\vec{\alpha}}$ and $y^{\vec{\alpha}}$, respectively, then we have 
\begin{eqnarray*}
\lim_{\alpha_1 \rightarrow \infty}x^{\vec{\alpha}} = x_{\alpha_2,\alpha_{3},\ldots,\alpha_n} & = &(x_1^0,x^{\vec{\alpha}}_2,x^{\vec{\alpha}}_3, x^{\vec{\alpha}}_4, \ldots, x^{\vec{\alpha}}_n)\\
\lim_{\alpha_2 \rightarrow \infty}\lim_{\alpha_1 \rightarrow \infty}x^{\vec{\alpha}} = x_{\alpha_3,\alpha_{4},\ldots,\alpha_n} & = & (x_1^0, x^0_2, x^{\vec{\alpha}}_3,x^{\vec{\alpha}}_4, \ldots, x^{\vec{\alpha}}_n)\\
\vdots \\
\textmd{and\ \ }\lim_{\alpha_n \rightarrow \infty}\lim_{\alpha_{n-1} \rightarrow \infty}\cdots\lim_{\alpha_2 \rightarrow \infty}\lim_{\alpha_1 \rightarrow \infty}x^{\vec{\alpha}} = 
x_0 & = & (x_1^0, x^0_2, x^0_3, \ldots, x^0_n). 
\end{eqnarray*}
Similarly, 
\begin{eqnarray*}
\lim_{\alpha_1 \rightarrow \infty}y^{\vec{\alpha}} = y_{\alpha_2,\alpha_{3},\ldots,\alpha_n} & = & (x_1^0,y^{\vec{\alpha}}_2,y^{\vec{\alpha}}_3, y^{\vec{\alpha}}_4, \ldots, y^{\vec{\alpha}}_n)\\
\lim_{\alpha_2 \rightarrow \infty}\lim_{\alpha_1 \rightarrow \infty}y^{\vec{\alpha}} = y_{\alpha_3,\alpha_{4},\ldots,\alpha_n} & = & (x_1^0, x^0_2, y^{\vec{\alpha}}_3,y^{\vec{\alpha}}_4, \ldots, y^{\vec{\alpha}}_n)\\
\vdots \\
\textmd{and\ \ }\lim_{\alpha_n \rightarrow \infty}\lim_{\alpha_{n-1} \rightarrow \infty}\cdots\lim_{\alpha_2 \rightarrow \infty}\lim_{\alpha_1 \rightarrow \infty}y^{\vec{\alpha}} = 
x_0 & = & (x_1^0, x^0_2, x^0_3, \ldots, x^0_n). 
\end{eqnarray*}
\end{remark}
The importance of the Iterated Maximum Principle is that it will allow us to isolate the vector fields by direction. In the Euclidean case, it is sufficient to use the Lipschitz property and then take the full limit, which is independent of direction. This approach is incompatible with the fact that the Grushin distance estimates vary at each point.  Thus, we must use the Lipschitz property with the directions independently. This will allow us to overcome the first technical challenge. In order to do this, we need the following lemma and corollary. 
\begin{lemma}\cite[Lemma 3.3]{B:GC}\label{lip}
Assume the hypotheses of the Iterated Maximum Principle (Lemma \ref{malpha}) and the notation of Remark \ref{remarkcomp}.  Suppose that at least one of $u$ or $v$ is (Grushin) Lipschitz. 
 Define the point $(x\diamond_i y)$ by $$(x\diamond_i y)=(x_1,x_2,\ldots,x_{i-1},y_i,x_{i+1},\ldots,x_n).$$ That is, $(x\diamond_i y)$ coincides with $y$ in the $i$-th coordinate and coincides with $x$ elsewhere. Then there is a finite positive constant $K$ so that 
$$\alpha_i(x_i^{\vec{\alpha}}-y_i^{\vec{\alpha}})^2 \leq K d_C((x^{\vec{\alpha}}\diamond_i y^{\vec{\alpha}}),x^{\vec{\alpha}}).$$
\end{lemma}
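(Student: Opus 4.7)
The plan is a one-line competitor argument: exploit the maximality of $(x^{\vec{\alpha}}, y^{\vec{\alpha}})$ against a test pair that perturbs only the $i$-th coordinate, then absorb the resulting difference in $u$ through the Grushin Lipschitz hypothesis. Without loss of generality assume $u$ has Grushin Lipschitz constant $L$; the case where $v$ is Lipschitz is completely symmetric (swap the roles of $x$ and $y$ in the competitor).

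First I would plug the competitor pair $(x^{\vec{\alpha}} \diamond_i y^{\vec{\alpha}}, y^{\vec{\alpha}})$ into the defining supremum for $M_{\alpha_1,\alpha_2,\ldots,\alpha_n}$, which yields
\[
u(x^{\vec{\alpha}} \diamond_i y^{\vec{\alpha}}) - v(y^{\vec{\alpha}}) - \varphi_{\vec{\alpha}}(x^{\vec{\alpha}} \diamond_i y^{\vec{\alpha}}, y^{\vec{\alpha}}) \leq u(x^{\vec{\alpha}}) - v(y^{\vec{\alpha}}) - \varphi_{\vec{\alpha}}(x^{\vec{\alpha}}, y^{\vec{\alpha}}).
\]
Since the competitor agrees with $x^{\vec{\alpha}}$ in every coordinate except the $i$-th, where it equals $y^{\vec{\alpha}}_i$, the Euclidean quadratic penalty loses precisely its $i$-th term, so after cancellation one obtains
\[
\tfrac{1}{2}\alpha_i (x^{\vec{\alpha}}_i - y^{\vec{\alpha}}_i)^2 \leq u(x^{\vec{\alpha}}) - u(x^{\vec{\alpha}} \diamond_i y^{\vec{\alpha}}) \leq L \cdot d_C(x^{\vec{\alpha}}, x^{\vec{\alpha}} \diamond_i y^{\vec{\alpha}}),
\]
where the final inequality is the Grushin Lipschitz estimate applied to $u$. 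Choosing $K = 2L$ closes the proof.

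The only subtlety to check is that the competitor is admissible. It lies in $\overline{\Omega} \times \overline{\Omega}$ since its coordinates come from $x^{\vec{\alpha}}$ and $y^{\vec{\alpha}}$; and if the iterated version of the principle is at play (so that $x_k = y_k$ is forced for $k < j$) then one only invokes the lemma for $i \geq j$, and the perturbation does not touch the constrained coordinates (the cases $i<j$ are vacuous since then $x_i^{\vec{\alpha}}=y_i^{\vec{\alpha}}$ and the inequality is $0 \leq 0$). I do not anticipate any real obstacle; the conceptual content is simply that the Euclidean penalty delivers a Euclidean bound on $\alpha_i(x_i-y_i)^2$, while the Lipschitz hypothesis upgrades the right-hand side into the intrinsic Grushin distance, and it is precisely this conversion that makes the lemma useful for the anisotropic, direction-dependent estimates $d_C$ encodes via the exponents $r^i_{x_0}$.
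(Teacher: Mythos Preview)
The paper does not prove this lemma; it is quoted verbatim from \cite[Lemma~3.3]{B:GC}, so there is no in-paper proof to compare against. Your competitor argument is exactly the standard one that appears in that reference: test the maximizing pair against $(x^{\vec{\alpha}}\diamond_i y^{\vec{\alpha}},\,y^{\vec{\alpha}})$, observe that the penalty drops precisely its $i$-th summand, and then bound $u(x^{\vec{\alpha}})-u(x^{\vec{\alpha}}\diamond_i y^{\vec{\alpha}})$ by the Grushin Lipschitz constant times $d_C$.

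One genuine correction: your admissibility justification---that $x^{\vec{\alpha}}\diamond_i y^{\vec{\alpha}}\in\overline{\Omega}$ ``since its coordinates come from $x^{\vec{\alpha}}$ and $y^{\vec{\alpha}}$''---is false for a general bounded domain; $\overline{\Omega}$ need not be a coordinate box. The correct reason is that, by Remark~\ref{remarkcomp}, $x^{\vec{\alpha}},y^{\vec{\alpha}}\to x_0\in\Omega$, so for all sufficiently large parameters both lie in a small Euclidean ball $B(x_0,r)\subset\Omega$, and then $x^{\vec{\alpha}}\diamond_i y^{\vec{\alpha}}\in B(x_0,\sqrt{2}\,r)\subset\Omega$ as well. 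Since the lemma is only ever invoked as the $\alpha_i$'s tend to infinity, this suffices. With that fix your proof is complete.
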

The following corollary follows immediately from Equation \eqref{distest}. 
\begin{corollary}\label{lipcor}
When $\rho_i(x_1^\alpha,x_2^\alpha,\ldots,x_{i-1}^\alpha)\neq 0$, locally, we have 
$$\alpha_i(x_i^{\vec{\alpha}}-y_i^{\vec{\alpha}})^2 \leq K d_C((x^{\vec{\alpha}}\diamond_i y^{\vec{\alpha}}),x^{\vec{\alpha}})=C|x_i^{\vec{\alpha}}-y_i^{\vec{\alpha}}|.$$
\end{corollary}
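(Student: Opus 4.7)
The plan is to combine the preceding Lemma \ref{lip} with the local distance estimate \eqref{distest} directly, since the corollary is essentially a matter of reading off the correct exponent in the Grushin distance when $\rho_i$ does not vanish. First I would invoke Lemma \ref{lip} to obtain the inequality
\[
\alpha_i(x_i^{\vec{\alpha}}-y_i^{\vec{\alpha}})^2 \leq K\, d_C\bigl((x^{\vec{\alpha}}\diamond_i y^{\vec{\alpha}}),\, x^{\vec{\alpha}}\bigr),
\]
so the entire content of the corollary reduces to estimating the Carnot--Carath\'eodory distance on the right-hand side.

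Next I would apply \eqref{distest} centered at $x^{\vec{\alpha}}$. By the very definition of $(x^{\vec{\alpha}}\diamond_i y^{\vec{\alpha}})$, it agrees with $x^{\vec{\alpha}}$ in every coordinate except the $i$-th, so every term in the sum $\sum_{k=1}^n |x_k-x_k^{\vec{\alpha}}|^{1/(1+r_{x^{\vec{\alpha}}}^k)}$ vanishes except the one with $k=i$, giving
\[
d_C\bigl((x^{\vec{\alpha}}\diamond_i y^{\vec{\alpha}}),\, x^{\vec{\alpha}}\bigr)\;\sim\; |y_i^{\vec{\alpha}}-x_i^{\vec{\alpha}}|^{1/(1+r_{x^{\vec{\alpha}}}^i)}.
\]
The hypothesis $\rho_i(x_1^{\vec{\alpha}},\ldots,x_{i-1}^{\vec{\alpha}})\neq 0$ means exactly that $r^i_{x^{\vec{\alpha}}}=0$, by the equivalence noted just before \eqref{distest}, so the exponent equals $1$ and the distance is comparable to $|x_i^{\vec{\alpha}}-y_i^{\vec{\alpha}}|$. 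Absorbing the comparability constant into $C$ closes the chain of inequalities.

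The only mildly subtle point, and the reason the word ``locally'' appears in the statement, is that \eqref{distest} is a local estimate with a constant that depends on the base point and the size of a neighborhood on which $\rho_i$ stays non-zero. I would justify this by appealing to Remark \ref{remarkcomp}: the iterated limits of $x^{\vec{\alpha}}$ converge to $x_0$, so for all sufficiently large $\vec{\alpha}$ the point $x^{\vec{\alpha}}$ lies in a fixed small neighborhood of $x_0$ on which $\rho_i$ is bounded away from zero and \eqref{distest} holds with a uniform constant. No genuine obstacle arises; the work has all been done in Lemma \ref{lip} and in the comparison \eqref{distest}, and the corollary is simply the specialization of that comparison to the single non-degenerate direction $i$.
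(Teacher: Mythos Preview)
Your argument is correct and matches the paper's own reasoning: the paper simply states that the corollary ``follows immediately from Equation \eqref{distest},'' and what you have written is precisely the unpacking of that sentence---invoke Lemma \ref{lip} for the inequality, then use \eqref{distest} together with the equivalence $\rho_i\neq 0 \Leftrightarrow r^i_{x^{\vec{\alpha}}}=0$ to reduce the distance to $|x_i^{\vec{\alpha}}-y_i^{\vec{\alpha}}|$.
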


The second important fact used in the Euclidean proof is that the first-order jet elements of a viscosity $\infty(x)$-superharmonic and viscosity $\infty(x)$-subharmonic are identical. Because the polynomials are non-constant, this is not the case in the Grushin environment. Using Theorem 3.4 of \cite{B:MP} (or the Main Lemma of \cite{B:GS}), we have for an upper semicontinuous function $u$ and a lower semicontinuous function $v$,
\begin{equation*}
(\Upsilon_{x^{\vec{\alpha}}}, \mathcal{X}^{\vec{\alpha}})\in \overline{J}^{2,+}u(x^{\vec{\alpha}})\ \textmd{and}\ \ (\Upsilon_{y^{\vec{\alpha}}}, \mathcal{Y}^{\vec{\alpha}})\in \overline{J}^{2,-}v(y^{\vec{\alpha}}) 
\end{equation*} 
where $(x^{\vec{\alpha}},y^{\vec{\alpha}})$ are the points associated with $M_{\alpha_1,\alpha_{2},\ldots,\alpha_n}$ (from Remark \ref{remarkcomp}) and 
\begin{eqnarray}
(\Upsilon_{x^{\vec{\alpha}}})_i & = & \rho_i(x^{\vec{\alpha}}_1, x^{\vec{\alpha}}_2,\ldots, x^{\vec{\alpha}}_{i-1})\alpha_i(x^{\vec{\alpha}}_i-y^{\vec{\alpha}}_i) \label{jetvecdef}\\
\textmd{and}\ \ (\Upsilon_{y^{\vec{\alpha}}})_i & = & \rho_i(y^{\vec{\alpha}}_1, y^{\vec{\alpha}}_2,\ldots, y^{\vec{\alpha}}_{i-1}))\alpha_i(x^{\vec{\alpha}}_i-y^{\vec{\alpha}}_i).\nonumber
\end{eqnarray}
We see that these vectors are, in general, not equal. 
We will overcome this challenge by producing more complicated estimates that are still controllable. 

\section{Existence-Uniqueness of $\infty(x)$-harmonic functions}
Let $\Omega$ be a bounded domain in $G_n$ and $f:\partial \Omega\to\mathbb{R}$ be a (Grushin) Lipschitz function. 

We will first establish the existence of $\infty(x)$-harmonic functions using Jensen's  auxiliary equations \cite{Je:ULE}: 
\begin{equation*}
\min \{\|\nabla_0 u\|^2 - \varepsilon, - \Delta_{\infty(x)} u \}  =  0 
\ \ \textmd{and}\ \ \max \{\varepsilon -\|\nabla_0 u\|^2 , - \Delta_{\infty(x)} u \} = 0
\end{equation*}
for a real parameter $\varepsilon>0$ . 
The procedure for existence of viscosity solutions to these equations (and viscosity $\infty(x)$-harmonic functions)  is identical to \cite[Section 4]{B:HG} and \cite[Section 2]{LL}, up to the obvious modifications. For completeness, we state the steps as one theorem and omit the proofs. 
\begin{thm}\cite{LL, B:HG}\label{exist}
We have the following results:
\begin{enumerate}
\item Let $\varepsilon\in\mathbb{R}$. If $u_k$ is a continuous potential-theoretic weak sub-(super-)solution with $u\in W^{1,k\tp(x)}(\Omega)$ to:
   \begin{eqnarray*}
\left\{ \begin{array}{cc}
-\Delta_{k\tp(x)} u_k  =  \varepsilon^{k\tp(x)-1}  & \textmd{in}\  \Omega \\
u = f & \textmd{on}\ \partial \Omega
\end{array} \right.
\end{eqnarray*}
then it is a viscosity sub-(super-)solution.
\item Letting $k\to\infty$, we have $u_k\to u_\infty$ uniformly (possibly up to a subsequence) in $\Omega$ with $u_\infty\in W^{1,\infty}(\Omega)\cap C(\overline{\Omega})$.
\item The function $u_\infty$ is a viscosity solution to 
\begin{eqnarray*}
\min \{\|\nabla_0 u_\infty\|^2 - \varepsilon, - \Delta_{\infty(x)} u_\infty \} =0 & \textmd{when} & \varepsilon>0 \\
\max \{\varepsilon -\|\nabla_0 u_\infty\|^2 , - \Delta_{\infty(x)} u_\infty \}  =  0 & \textmd{when} & \varepsilon<0 \\
- \Delta_{\infty(x)} u_\infty =0  & \textmd{when} & \varepsilon=0. 
\end{eqnarray*}  
\end{enumerate}
\end{thm}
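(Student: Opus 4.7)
The plan is to execute the three-step Jensen/$\tp$-Laplace-approximation scheme of \cite{LL, B:HG}, tracking only the places where the Grushin vector fields require care. For part (1), I would argue by contradiction: if the continuous weak subsolution $u_k$ fails to be a viscosity subsolution at some $x_0 \in \Omega$, a $C^2$ test function $\varphi$ touches $u_k$ from above with a strict inequality $-\Delta_{k\tp(x_0)}\varphi(x_0) > \varepsilon^{k\tp(x_0)-1}$ that, by continuity, persists in a Grushin ball $B_r(x_0)$. Replacing $u_k$ locally by $\max\{u_k,\varphi-\delta\}$ for small $\delta>0$ produces an admissible competitor in $W^{1,k\tp(x)}$ that strictly lowers the energy (via the horizontal divergence theorem applied to the smooth $X_i$'s integrated against a nonnegative cutoff), contradicting the weak-subsolution property. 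The supersolution case is symmetric.

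For part (2), I would establish uniform convergence via energy estimates. Extending $f$ to a Grushin-Lipschitz function $\tilde f$ on $\overline{\Omega}$ and testing the minimization with $\tilde f$ yields
\begin{equation*}
\int_\Omega \frac{\|\nabla_0 u_k\|^{k\tp(x)}}{k\tp(x)}\,dx \leq C
\end{equation*}
with $C$ depending only on $\|\nabla_0 \tilde f\|_\infty$ and $|\Omega|$. A H\"older argument at a fixed exponent $m$ with $k\tp(x)\geq m$ converts this into $\|\nabla_0 u_k\|_{L^m(\Omega)} \leq C_m$ for $k$ large; sending $m\to\infty$ along a diagonal subsequence yields a uniform horizontal Lipschitz bound $\|\nabla_0 u_k\|_{L^\infty}\leq C$, and Arzel\`a-Ascoli combined with the equicontinuous boundary data delivers uniform convergence to some $u_\infty \in \wi(\Omega)\cap C(\overline{\Omega})$.

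For part (3), let $\varphi\in C^2$ have $u_\infty-\varphi$ achieve a strict local max at $x_0\in\Omega$; uniform convergence yields a sequence $x_k\to x_0$ at which $u_k-\varphi$ has a local max, and part (1) supplies the $k\tp$-viscosity inequality at $x_k$. When $\|\nabla_0\varphi(x_0)\|\neq 0$, I would divide through by $(k\tp(x_k)-2)\|\nabla_0\varphi(x_k)\|^{k\tp(x_k)-4}$ and pass to the limit: the $\trace$ term contributes $O(k^{-1})$, the gradient-of-$k\tp$ term passes to the intrinsic correction $\|\nabla_0\varphi\|^2\langle\nabla_0\varphi,\nabla_0\ln\tp(x)\rangle \ln\|\nabla_0\varphi\|$ using $\nabla_0 k\tp = k\nabla_0\tp$ so that the factor of $k$ cancels the $(k\tp-2)$ denominator, and the right-hand side $\varepsilon^{k\tp(x_k)-1}$ vanishes or dominates in the limit according to the sign of $\|\nabla_0\varphi(x_0)\|^2-\varepsilon$. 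This dichotomy delivers the $\min$ form for $\varepsilon>0$, the $\max$ form for $\varepsilon<0$, and the bare equality $-\Delta_{\infty(x)}u_\infty=0$ for $\varepsilon=0$. The supersolution inequalities follow by symmetry.

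The hardest part is the bookkeeping in (3) at the threshold $\|\nabla_0\varphi(x_0)\|^2=\varepsilon$, where the competing powers $(k\tp-4)$ on the left and $(k\tp-1)$ on the right become comparable and the logarithmic correction first appears; this is where the hypothesis $\tp\in C^1\cap\wi$ is essential, and where the degenerate case $\nabla_0\varphi(x_0)=0$ must be handled by the standard upper/lower semicontinuous envelope convention, exactly as in \cite{LL}. The Grushin vector fields do not disrupt the analysis because every estimate is pointwise and the $X_i$ are smooth, so the modifications to \cite{LL, B:HG} remain essentially notational.
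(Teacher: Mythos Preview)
Your proposal is correct and matches the paper's approach exactly: the paper does not give a proof of this theorem at all, stating only that ``the procedure \ldots\ is identical to \cite[Section 4]{B:HG} and \cite[Section 2]{LL}, up to the obvious modifications'' and then omitting the argument. Your outline is precisely that procedure---the weak-to-viscosity contradiction for (1), the uniform horizontal $W^{1,m}$ bounds plus Arzel\`a--Ascoli for (2), and the divide-by-$(k\tp-2)\|\nabla_0\varphi\|^{k\tp-4}$ limit passage for (3)---so there is nothing to compare.
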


In light of \cite[Lemma 5.6]{B:HG} and \cite[Lemma 2.2]{LL}, the Main Theorem follows from showing the uniqueness of viscosity solutions to the Jensen auxiliary equations. We will establish this result.

\begin{thm}\label{compinf}
Let $v=u_\infty$ be the viscosity solution from Theorem \ref{exist} to 
\begin{equation}\label{minequ}
\min \{\|\nabla_0 u\|^2 - \varepsilon, - \Delta_{\infty(x)} u \} =0
\end{equation}
in a bounded domain $\Omega$. If $u$ is an upper semi-continuous viscosity subsolution to Equation \eqref{minequ} in $\Omega$ so that $u\leq v$ on $\partial \Omega$, then $u\leq v$ in $\Omega$. 
\end{thm}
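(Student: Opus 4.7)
The plan is to argue by contradiction. Assume $\sup_{\overline{\Omega}}(u-v)=u(x_0)-v(x_0)>0$ is attained at some interior point $x_0\in\Omega$; the boundary condition $u\leq v$ on $\partial\Omega$ rules out boundary maxima. Apply the Iterated Maximum Principle (Lemma \ref{malpha}) with the coordinate-wise quadratic penalty $\varphi_{\vec{\alpha}}(x,y)=\sum_{i=1}^{n}\tfrac{1}{2}\alpha_i(x_i-y_i)^2$ to produce doubled maximum points $x^{\vec{\alpha}},y^{\vec{\alpha}}$ that merge to $x_0$ under the iterated limits of Remark \ref{remarkcomp}. Invoking Theorem 3.4 of \cite{B:MP}, I extract Grushin jet pairs $(\Upsilon_{x^{\vec{\alpha}}},\mathcal{X}^{\vec{\alpha}})\in\overline{J}^{2,+}u(x^{\vec{\alpha}})$ and $(\Upsilon_{y^{\vec{\alpha}}},\mathcal{Y}^{\vec{\alpha}})\in\overline{J}^{2,-}v(y^{\vec{\alpha}})$ with first-order parts given by \eqref{jetvecdef} and a standard matrix ordering between $\mathcal{X}^{\vec{\alpha}}$ and $\mathcal{Y}^{\vec{\alpha}}$.

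Because $v$ is a supersolution of \eqref{minequ}, \emph{both} components of the minimum satisfy the supersolution inequality at the $y^{\vec{\alpha}}$-jet; in particular $\|\Upsilon_{y^{\vec{\alpha}}}\|^{2}\geq\varepsilon>0$ together with the $\infty(x)$-inequality with $\geq 0$ on the right. The lower bound on $\|\Upsilon_{y^{\vec{\alpha}}}\|$ is essential because it keeps $\ln\|\Upsilon_{y^{\vec{\alpha}}}\|$ bounded, so the logarithmic nonlinearity remains controllable. Because $u$ is a subsolution, at each $\vec{\alpha}$ at least one of the following holds: (A) the analogous $\infty(x)$-inequality with $\leq 0$ at the $x^{\vec{\alpha}}$-jet, or (B) $\|\Upsilon_{x^{\vec{\alpha}}}\|^{2}\leq\varepsilon$.

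The crucial Grushin estimate is that $\Upsilon_{x^{\vec{\alpha}}}-\Upsilon_{y^{\vec{\alpha}}}$ vanishes in the iterated limits. Indeed, \eqref{jetvecdef} gives $(\Upsilon_{x^{\vec{\alpha}}})_i-(\Upsilon_{y^{\vec{\alpha}}})_i=[\rho_i(x^{\vec{\alpha}})-\rho_i(y^{\vec{\alpha}})]\,\alpha_i(x_i^{\vec{\alpha}}-y_i^{\vec{\alpha}})$; Corollary \ref{lipcor} bounds $\alpha_i(x_i^{\vec{\alpha}}-y_i^{\vec{\alpha}})$ uniformly on strata where $\rho_i(x_0)\neq 0$, while continuity of $\rho_i$ combined with Remark \ref{remarkcomp} (which merges the earlier coordinates \emph{before} the $i$-th limit is taken) drives $\rho_i(x^{\vec{\alpha}})-\rho_i(y^{\vec{\alpha}})\to 0$; the vanishing strata are handled by a direct appeal to Lemma \ref{lip}. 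Consequently both jet vectors stay bounded and share a common nonzero iterated limit (at least $\sqrt{\varepsilon}$), so for $\vec{\alpha}$ sufficiently deep into the iterated limits one has $\|\Upsilon_{x^{\vec{\alpha}}}\|\geq\sqrt{\varepsilon}/2$ and Case (A) is in force. Subtracting the two $\infty(x)$-inequalities, the matrix piece is controlled by the ordering of $\mathcal{X}^{\vec{\alpha}}$ and $\mathcal{Y}^{\vec{\alpha}}$, the inner products against $\nabla_0\ln\tp$ converge by continuity of $\tp$, and the logarithmic pieces converge because both $\|\Upsilon_{x^{\vec{\alpha}}}\|$ and $\|\Upsilon_{y^{\vec{\alpha}}}\|$ share the same nonzero iterated limit. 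Passing to the iterated limit yields $0\geq c>0$, the required contradiction.

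The hardest step is navigating Case (B) in the presence of the Grushin asymmetry $\Upsilon_{x^{\vec{\alpha}}}\neq\Upsilon_{y^{\vec{\alpha}}}$. In the Euclidean argument of \cite{LL} the two first-order jet elements coincide, so $\|\Upsilon_y\|^{2}\geq\varepsilon$ immediately forces $\|\Upsilon_x\|^{2}\geq\varepsilon$ and one simply discards (B). In the Grushin setting this equality fails by \eqref{jetvecdef}, so I must quantify how fast $\Upsilon_{x^{\vec{\alpha}}}-\Upsilon_{y^{\vec{\alpha}}}\to 0$; this is exactly where Corollary \ref{lipcor} and the coordinate-merging of Remark \ref{remarkcomp} become indispensable, and where the \emph{ordered} iterated limits (rather than a single full limit) carry the proof through. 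Once (B) has been reduced to (A), the subtraction of $\infty(x)$-inequalities proceeds along the usual pattern, with iterated-limit estimates replacing the Euclidean single-limit estimates.
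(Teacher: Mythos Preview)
Your outline tracks the paper's strategy closely---iterated maximum principle, Grushin jets via \eqref{jetvecdef}, Corollary \ref{lipcor} and Remark \ref{remarkcomp} to collapse coordinate differences---but there is a genuine gap at the final step. You never reduce $v$ to a \emph{strict} supersolution, and without that reduction the claimed contradiction ``$0\geq c>0$'' has no source for $c$. With only the (non-strict) supersolution inequality $\geq 0$ at $y^{\vec{\alpha}}$ and the subsolution inequality $\leq 0$ at $x^{\vec{\alpha}}$, subtraction yields an expression $\geq 0$ whose iterated limit is $0$; that is consistent, not contradictory. The paper opens its proof by invoking \cite[Lemma 3.1]{LL} and \cite[Theorem 5.3]{B:HG} to assume WLOG that $v$ is a strict supersolution, producing a fixed $\mu>0$ that survives the subtraction and persists through every iterated limit. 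This step is not cosmetic---it is the entire mechanism generating the contradiction.

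A related defect is your handling of Case (B). You assert that $\|\Upsilon_{x^{\vec{\alpha}}}\|\geq\sqrt{\varepsilon}/2$ forces Case (A); it does not, since $\|\Upsilon_{x^{\vec{\alpha}}}\|^{2}$ may sit in $[\varepsilon/4,\varepsilon]$. You also claim the two jet vectors ``share a common nonzero iterated limit,'' but boundedness (which is all Corollary \ref{lipcor} supplies) does not give convergence, and the paper never asserts it. The paper sidesteps both issues: it subtracts the full $\min$-expressions to obtain a $\max$ of two terms (Equation \eqref{contr}), then shows each term of that $\max$ has iterated limit $0$---the first via Equation \eqref{vectorlimit}, the second via a four-case analysis on the signs of $\ln(\|\Upsilon_{x^{\vec{\alpha}}}\|/\|\Upsilon_{y^{\vec{\alpha}}}\|)$ and the size of $|\ln\|\Upsilon_{y^{\vec{\alpha}}}\||$, together with the coordinate-wise bookkeeping of Corollary \ref{lipcor}. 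With the strict-supersolution $\mu>0$ in hand, the conclusion $\mu\leq\max\to 0$ is the desired contradiction.
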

\begin{proof}
Following \cite[Lemma 3.1]{LL} and \cite[Theorem 5.3]{B:HG}, we may assume WLOG that $v$ is a strict viscosity supersolution. Suppose $$\sup_{\Omega} (u-v)>0$$ and let $$\varphi_{\alpha_1,\alpha_{2},\ldots,\alpha_n}(x,y) = \sum_{i=1}^n \frac{1}{2}\alpha_i(x_i-y_i)^2.$$ be the function in the Iterated Maximum Principle (Theorem \ref{malpha}). By \cite[Theorem 3.2]{B:MP}, we have vectors $\Upsilon_{x^{\vec{\alpha}}}, \Upsilon_{y^{\vec{\alpha}}}$ and symmetric matrices $\mathcal{X}^{\vec{\alpha}}, \mathcal{Y}^{\vec{\alpha}}$ so that 
$$(\Upsilon_{x^{\vec{\alpha}}}, \mathcal{X}^{\vec{\alpha}})\in \overline{J}^{2,+}u(x^{\vec{\alpha}})\ \ \textmd{and}\ \ 
(\Upsilon_{y^{\vec{\alpha}}, \mathcal{Y}^{\vec{\alpha}}})\in \overline{J}^{2,-}v(y^{\vec{\alpha}}).$$  The vectors are explicitly given by Equation \eqref{jetvecdef}.

Since $u$ is a viscosity subsolution and $v$ a strict viscosity supersolution, we have, for some $\mu>0$, 
\begin{eqnarray*}
0 & \geq & \min \{\|\Upsilon_{x^{\vec{\alpha}}}\|^2 - \varepsilon, 
-\ip{\mathcal{X}^{\vec{\alpha}}\Upsilon_{x^{\vec{\alpha}}}}{\Upsilon_{x^{\vec{\alpha}}}} - \|\Upsilon_{x^{\vec{\alpha}}}\|^{2}\ip{\Upsilon_{x^{\vec{\alpha}}}}{\nabla_0\ln\tp(x^{\vec{\alpha}})}\ln \|\Upsilon_{x^{\vec{\alpha}}}\|\}\\
0<\mu & \leq &  \min \{\|\Upsilon_{y^{\vec{\alpha}}}\|^2 - \varepsilon, 
-\ip{\mathcal{Y}^{\vec{\alpha}}\Upsilon_{y^{\vec{\alpha}}}}{\Upsilon_{y^{\vec{\alpha}}}} - \|\Upsilon_{y^{\vec{\alpha}}}\|^{2}\ip{\Upsilon_{y^{\vec{\alpha}}}}{\nabla_0\ln\tp(y^{\vec{\alpha}})}\ln \|\Upsilon_{y^{\vec{\alpha}}}\|\}.
\end{eqnarray*}

Subtracting these equations, we obtain
\begin{eqnarray}
0<\mu & \leq & \max \{\|\Upsilon_{y^{\vec{\alpha}}}\|^2 - \|\Upsilon_{x^{\vec{\alpha}}}\|^2, 
\ip{\mathcal{X}^{\vec{\alpha}}\Upsilon_{x^{\vec{\alpha}}}}{\Upsilon_{x^{\vec{\alpha}}}}-\ip{\mathcal{Y}^{\vec{\alpha}}\Upsilon_{y^{\vec{\alpha}}}}
{\Upsilon_{y^{\vec{\alpha}}}} \nonumber \\
 & &\hspace{.5in} \mbox{}+\|\Upsilon_{x^{\vec{\alpha}}}\|^{2}\ip{\Upsilon_{x^{\vec{\alpha}}}}{\nabla_0\ln\tp(x^{\vec{\alpha}})}\ln \|\Upsilon_{x^{\vec{\alpha}}}\| \label{contr} \\ 
 & & \hspace{.5in}\mbox{}- \|\Upsilon_{y^{\vec{\alpha}}}\|^{2}\ip{\Upsilon_{y^{\vec{\alpha}}}}{\nabla_0\ln\tp(y^{\vec{\alpha}})}\ln \|\Upsilon_{y^{\vec{\alpha}}}\|\}.\nonumber
\end{eqnarray}
By \cite[Equation 5.6]{B:GS}, we have 
\begin{equation}\label{vectorlimit}
\lim_{\alpha_n\to \infty} 
\lim_{\alpha_{n-1}\to \infty} 
\cdots \lim_{\alpha_2\to \infty} \lim_{\alpha_1\to \infty} \|\Upsilon_{y_{\vec{\alpha}}}\|^2-\|\Upsilon_{x_{\vec{\alpha}}}\|^2=0.
\end{equation}
Also by \cite[Section 3]{B:GC}, we have 
\begin{equation*}
\lim_{\alpha_n\to \infty} \lim_{\alpha_{n-1}\to \infty} \cdots \lim_{\alpha_2\to \infty} \lim_{\alpha_1\to \infty} \ip{\mathcal{X}^{\vec{\alpha}}\Upsilon_{x^{\vec{\alpha}}}}{\Upsilon_{x^{\vec{\alpha}}}}-\ip{\mathcal{Y}^{\vec{\alpha}}\Upsilon_{y^{\vec{\alpha}}}}{\Upsilon_{y^{\vec{\alpha}}}}=0.
\end{equation*}
We therefore turn our attention to 
\begin{eqnarray*}
\|\Upsilon_{x^{\vec{\alpha}}}\|^{2}\ip{\Upsilon_{x^{\vec{\alpha}}}}{\nabla_0\ln\tp(x^{\vec{\alpha}})}\ln \|\Upsilon_{x^{\vec{\alpha}}}\|-\|\Upsilon_{y^{\vec{\alpha}}}\|^{2}\ip{\Upsilon_{y^{\vec{\alpha}}}}{\nabla_0\ln\tp(y^{\vec{\alpha}})}\ln \|\Upsilon_{y^{\vec{\alpha}}}\|.
\end{eqnarray*}
We begin by expanding this term:
\begin{eqnarray*}
\lefteqn{\|\Upsilon_{x^{\vec{\alpha}}}\|^{2}\ip{\Upsilon_{x^{\vec{\alpha}}}}{\nabla_0\ln\tp(x^{\vec{\alpha}})}\ln \|\Upsilon_{x^{\vec{\alpha}}}\|-\|\Upsilon_{y^{\vec{\alpha}}}\|^{2}\ip{\Upsilon_{y^{\vec{\alpha}}}}{\nabla_0\ln\tp(y^{\vec{\alpha}})}\ln \|\Upsilon_{y^{\vec{\alpha}}}\|}& & \\ 
& = & \|\Upsilon_{x^{\vec{\alpha}}}\|^{2}\ip{\Upsilon_{x^{\vec{\alpha}}}}{\nabla_0\ln\tp(x^{\vec{\alpha}})}\bigg(\ln \|\Upsilon_{x^{\vec{\alpha}}}\|-\ln \|\Upsilon_{y^{\vec{\alpha}}}\|\bigg) \\
& + &\bigg(\|\Upsilon_{x^{\vec{\alpha}}}\|^{2}- \|\Upsilon_{y^{\vec{\alpha}}}\|^{2}\bigg)\ip{\Upsilon_{x^{\vec{\alpha}}}}{\nabla_0\ln\tp(x^{\vec{\alpha}})}\ln \|\Upsilon_{y^{\vec{\alpha}}}\| \\
 & + & \|\Upsilon_{y^{\vec{\alpha}}}\|^{2}\bigg(\ip{\Upsilon_{x^{\vec{\alpha}}}}{\nabla_0\ln\tp(x^{\vec{\alpha}})}-
 \ip{\Upsilon_{y^{\vec{\alpha}}}}{\nabla_0\ln\tp(x^{\vec{\alpha}})}\bigg)\ln \|\Upsilon_{y^{\vec{\alpha}}}\|\\
 & + & \|\Upsilon_{y^{\vec{\alpha}}}\|^{2}\bigg(\ip{\Upsilon_{y^{\vec{\alpha}}}}{\nabla_0\ln\tp(x^{\vec{\alpha}})}-
\ip{\Upsilon_{y^{\vec{\alpha}}}}{\nabla_0\ln\tp(y^{\vec{\alpha}})}\bigg)\ln \|\Upsilon_{y^{\vec{\alpha}}}\| \\ 
\end{eqnarray*}
Using  the fact that $1<\tp(x)\in C^1(\Omega)\cap W^{1,\infty}(\Omega)$ and the fact that $\ln a^2=2\ln a$, we have the absolute value of these terms is controlled by a finite constant $C$ times 
\begin{eqnarray}\label{sumest}
\lefteqn{\mathcal{T}\equiv\|\Upsilon_{x^{\vec{\alpha}}}\|^{3}
\bigg|\ln \frac{\|\Upsilon_{x^{\vec{\alpha}}}\|^2}{\|\Upsilon_{y^{\vec{\alpha}}}\|^2}\bigg|  +  \|\Upsilon_{x^{\vec{\alpha}}}\| \bigg|\|\Upsilon_{x^{\vec{\alpha}}}\|^{2}- \|\Upsilon_{y^{\vec{\alpha}}}\|^{2}\bigg|  \Big|\ln \|\Upsilon_{y^{\vec{\alpha}}}\|\Big|}& & \nonumber \\
 & + & \|\Upsilon_{y^{\vec{\alpha}}}\|^{2}\|\Upsilon_{x^{\vec{\alpha}}}-\Upsilon_{y^{\vec{\alpha}}}\|\Big|\ln \|\Upsilon_{y^{\vec{\alpha}}}\|\Big|\\ &  + &  
\|\Upsilon_{y^{\vec{\alpha}}}\|^{3}\|\nabla_0\ln\tp(x^{\vec{\alpha}})-\nabla_0\ln\tp(y^{\vec{\alpha}})\|\Big|\ln \|\Upsilon_{y^{\vec{\alpha}}}\|\Big|.\nonumber
\end{eqnarray}

We will need to consider several cases. These cases rely on the fact that $v$ is a strict supersolution, so that $\|\Upsilon_{y^{\vec{\alpha}}}\|^2>\varepsilon>0$. 

\noindent\textbf{Case 1:} $$\Big|\ln \|\Upsilon_{y^{\vec{\alpha}}}\|\Big|>\Big|\ln \varepsilon\Big|\ \textmd{\ and\ }\ \ln \frac{\|\Upsilon_{x^{\vec{\alpha}}}\|^2}{\|\Upsilon_{y^{\vec{\alpha}}}\|^2}>0.$$

Using the hypotheses of Case 1, we may express $\mathcal{T}$ in terms of coordinates. Namely, 
\begin{eqnarray*}
\lefteqn{\mathcal{T}\equiv\sum_{i=1}^n\bigg(\alpha^2_i\rho^2_i(x^{\vec{\alpha}}_1,x^{\vec{\alpha}}_2,\ldots, x^{\vec{\alpha}}_{i-1})(x^{\vec{\alpha}}_i-y^{\vec{\alpha}}_i)^2\bigg)^{\frac{3}{2}}} & & \\
& \times & \ln \Bigg(1+\frac{\D \sum_{i=2}^n\alpha^2_i\bigg(\rho^2_i(x^{\vec{\alpha}}_1,x^{\vec{\alpha}}_2,\ldots, x^{\vec{\alpha}}_{i-1})-
 \rho^2_i(y^{\vec{\alpha}}_1,y^{\vec{\alpha}}_2,\ldots, y^{\vec{\alpha}}_{i-1})\bigg)(x^{\vec{\alpha}}_i-y^{\vec{\alpha}}_i)^2}{\D \varepsilon}\Bigg) \\
 & &\mbox{}+\bigg(\sum_{i=1}^n \alpha^2_i\rho^2_i(x^{\vec{\alpha}}_1,x^{\vec{\alpha}}_2,\ldots, x^{\vec{\alpha}}_{i-1}) (x^{\vec{\alpha}}_i-y^{\vec{\alpha}}_i)^2\bigg)^{\frac{1}{2}}\\ & \times &  \bigg|\sum_{i=2}^n\alpha^2_i \Big(\rho^2_i(x^{\vec{\alpha}}_1,x^{\vec{\alpha}}_2,\ldots, x^{\vec{\alpha}}_{i-1})-\rho^2_i(y^{\vec{\alpha}}_1,y^{\vec{\alpha}}_2,\ldots, y^{\vec{\alpha}}_{i-1})\Big)(x^{\vec{\alpha}}_i-y^{\vec{\alpha}}_i)^2\bigg| \\
& \times & \frac{1}{2}\Big|\ln \sum_{i=1}^n \alpha^2_i\rho^2_i(y^{\vec{\alpha}}_1,y^{\vec{\alpha}}_2,\ldots, y^{\vec{\alpha}}_{i-1}) (x^{\vec{\alpha}}_i-y^{\vec{\alpha}}_i)^2\Big| \\ & + &
 \sum_{i=1}^n\bigg(\alpha^2_i\rho^2_i(y^{\vec{\alpha}}_1,y^{\vec{\alpha}}_2,\ldots, y^{\vec{\alpha}}_{i-1})(x^{\vec{\alpha}}_i-y^{\vec{\alpha}}_i)^2\bigg) \\
& \times & \sum_{i=2}^n\alpha^2_i\bigg(\rho_i(x^{\vec{\alpha}}_1,x^{\vec{\alpha}}_2,\ldots, x^{\vec{\alpha}}_{i-1})-
 \rho_i(y^{\vec{\alpha}}_1,y^{\vec{\alpha}}_2,\ldots, y^{\vec{\alpha}}_{i-1})\bigg)^2 (x^{\vec{\alpha}}_i-y^{\vec{\alpha}}_i)^2\\ 
 & \times & \frac{1}{2}\Big|\ln \sum_{i=1}^n \alpha^2_i\rho^2_i(y^{\vec{\alpha}}_1,y^{\vec{\alpha}}_2,\ldots, y^{\vec{\alpha}}_{i-1}) (x^{\vec{\alpha}}_i-y^{\vec{\alpha}}_i)^2\Big| \\
 & + & \bigg(\sum_{i=1}^n\Big(\alpha^2_i\rho^2_i(y^{\vec{\alpha}}_1,y^{\vec{\alpha}}_2,\ldots, y^{\vec{\alpha}}_{i-1})(x^{\vec{\alpha}}_i-y^{\vec{\alpha}}_i)^2\Big)\bigg)^{\frac{3}{2}} \\
 & \times & \bigg(\sum_{i=1}^n  \Big(\rho_i(x^{\vec{\alpha}}_1,x^{\vec{\alpha}}_2,\ldots, x^{\vec{\alpha}}_{i-1})\frac{\partial \tp(x)}{\partial x_i}|_{(x^{\vec{\alpha}}_1,x^{\vec{\alpha}}_2,\ldots, x^{\vec{\alpha}}_i)}\\ 
& & \hspace{.5in} \mbox{}-\rho_i(y^{\vec{\alpha}}_1,y^{\vec{\alpha}}_2,\ldots, y^{\vec{\alpha}}_{i-1})\frac{\partial \tp(y)}{\partial y_i}|_{(y^{\vec{\alpha}}_1,y^{\vec{\alpha}}_2,\ldots, y^{\vec{\alpha}}_i)}\Big)^2\bigg)^{\frac{1}{2}} \\
 & \times & \frac{1}{2}\Big|\ln \sum_{i=1}^n \alpha^2_i\rho^2_i(y^{\vec{\alpha}}_1,y^{\vec{\alpha}}_2,\ldots, y^{\vec{\alpha}}_{i-1}) (x^{\vec{\alpha}}_i-y^{\vec{\alpha}}_i)^2\Big|.
\end{eqnarray*}
Note that some of the sums start at $i=2$ since $\rho_1\equiv 1$. 

Using Corollary \ref{lipcor}, Remark \ref{remarkcomp}, and the fact that $\rho_1\equiv 1$ and $\tp(x)\in C^1\cap W^{1,\infty}$, we have for some finite constant $K$, 
 \begin{eqnarray*}
 \lefteqn{0\leq\mathcal{T}_1=\lim_{\alpha_1\to\infty}\mathcal{T}\leq \Bigg(K+\sum_{i=2}^n\bigg(\alpha^2_i\rho^2_i(x^0_1,x^{\vec{\alpha}}_2,\ldots, x^{\vec{\alpha}}_{i-1})(x^{\vec{\alpha}}_i-y^{\vec{\alpha}}_i)^2\bigg)^{\frac{3}{2}}\Bigg)} & & \\
& \times & \ln \Bigg(1+\frac{\D \sum_{i=3}^n\alpha^2_i\bigg(\rho^2_i(x^0_1,x^{\vec{\alpha}}_2,\ldots, x^{\vec{\alpha}}_{i-1})-
 \rho^2_i(x^0_1,y^{\vec{\alpha}}_2,\ldots, y^{\vec{\alpha}}_{i-1})\bigg)(x^{\vec{\alpha}}_i-y^{\vec{\alpha}}_i)^2}{\D \varepsilon}\Bigg) \\
 & &\mbox{}+\bigg(K+\sum_{i=2}^n \alpha^2_i\rho^2_i(x^0_1,x^{\vec{\alpha}}_2,\ldots, x^{\vec{\alpha}}_{i-1}) (x^{\vec{\alpha}}_i-y^{\vec{\alpha}}_i)^2\bigg)^{\frac{1}{2}}
\\ & \times &  \bigg|\sum_{i=3}^n\alpha^2_i \Big(\rho^2_i(x^0_1,x^{\vec{\alpha}}_2,\ldots, x^{\vec{\alpha}}_{i-1})-\rho^2_i(x^0_1,y^{\vec{\alpha}}_2,\ldots, y^{\vec{\alpha}}_{i-1})\Big)(x^{\vec{\alpha}}_i-y^{\vec{\alpha}}_{i-1})^2\bigg| \\
& \times & \frac{1}{2}\Big|\ln \big[K+ \sum_{i=2}^n \alpha^2_i\rho^2_i(x^0_1,y^{\vec{\alpha}}_2,\ldots, y^{\vec{\alpha}}_{i-1}) (x^{\vec{\alpha}}_i-y^{\vec{\alpha}}_i)^2\big]\Big| \\
& + & \Bigg(K+
 \sum_{i=2}^n\bigg(\alpha^2_i\rho^2_i(x^0_1,y^{\vec{\alpha}}_2,\ldots, y^{\vec{\alpha}}_{i-1})(x^{\vec{\alpha}}_i-y^{\vec{\alpha}}_i)^2\bigg)\Bigg) \\
& \times & \sum_{i=3}^n\alpha^2_i\bigg(\rho_i(x^0_1,x^{\vec{\alpha}}_2,\ldots, x^{\vec{\alpha}}_{i-1})-
 \rho_i(x^0_1,y^{\vec{\alpha}}_2,\ldots, y^{\vec{\alpha}}_{i-1})\bigg)^2 (x^{\vec{\alpha}}_i-y^{\vec{\alpha}}_i)^2\\ 
 & \times & \frac{1}{2}\Big|\ln \big[K+ \sum_{i=2}^n \alpha^2_i\rho^2_i(x^0_1,y^{\vec{\alpha}}_2,\ldots, y^{\vec{\alpha}}_{i-1}) (x^{\vec{\alpha}}_i-y^{\vec{\alpha}}_i)^2\big]\Big| \\
 & + & \bigg(K+\sum_{i=2}^n\Big(\alpha^2_i\rho^2_i(x^0_1,y^{\vec{\alpha}}_2,\ldots, y^{\vec{\alpha}}_{i-1})(x^{\vec{\alpha}}_i-y^{\vec{\alpha}}_i)^2\Big)\bigg)^{\frac{3}{2}} \\
 & \times & \bigg(\sum_{i=1}^n  \Big(\rho_i(x^0_1,x^{\vec{\alpha}}_2,\ldots, x^{\vec{\alpha}}_{i-1})\frac{\partial \tp(x)}{\partial x_i}|_{(x^0_1,x^{\vec{\alpha}}_2,\ldots, x^{\vec{\alpha}}_i)}\\ 
& & \hspace{.5in} \mbox{}-\rho_i(x^0_1,y^{\vec{\alpha}}_2,\ldots, y^{\vec{\alpha}}_{i-1})\frac{\partial \tp(y)}{\partial y_i}|_{(x^0_1,y^{\vec{\alpha}}_2,\ldots, y^{\vec{\alpha}}_i)}\Big)^2\bigg)^{\frac{1}{2}} \\
& \times & \frac{1}{2}\Big|\ln \big[K+ \sum_{i=2}^n \alpha^2_i\rho^2_i(x^0_1,y^{\vec{\alpha}}_2,\ldots, y^{\vec{\alpha}}_{i-1}) (x^{\vec{\alpha}}_i-y^{\vec{\alpha}}_i)^2\big]\Big|.
\end{eqnarray*}
Now, if $\rho_2(x^0_1)=0$, the corresponding term vanishes. If $\rho_2(x^0_1)\neq 0$, 
we again apply Corollary \ref{lipcor} and Remark \ref{remarkcomp} to obtain the existence of a finite constant $K$ so that 
 \begin{eqnarray*}
 \lefteqn{0\leq\mathcal{T}_2=\lim_{\alpha_2\to\infty}\lim_{\alpha_1\to\infty}\mathcal{T}\leq \Bigg(K+\sum_{i=3}^n\bigg(\alpha^2_i\rho^2_i(x^0_1,x^0_2,\ldots, x^{\vec{\alpha}}_{i-1})(x^{\vec{\alpha}}_i-y^{\vec{\alpha}}_i)^2\bigg)^{\frac{3}{2}}\Bigg)} & & \\
& \times & \ln \Bigg(1+\frac{\D \sum_{i=4}^n\alpha^2_i\bigg(\rho^2_i(x^0_1,x^0_2,\ldots, x^{\vec{\alpha}}_{i-1})-
 \rho^2_i(x^0_1,x^0_2,\ldots, y^{\vec{\alpha}}_{i-1})\bigg)(x^{\vec{\alpha}}_i-y^{\vec{\alpha}}_{i-1})^2}{\D \varepsilon}\Bigg) \\
 & &\mbox{}+\bigg(K+\sum_{i=3}^n \alpha^2_i\rho^2_i(x^0_1,x^0_2,\ldots, x^{\vec{\alpha}}_{i-1}) (x^{\vec{\alpha}}_i-y^{\vec{\alpha}}_i)^2\bigg)^{\frac{1}{2}}
\\ & \times &  \bigg|\sum_{i=4}^n\alpha^2_i \Big(\rho^2_i(x^0_1,x^0_2,\ldots, x^{\vec{\alpha}}_{i-1})-\rho^2_i(x^0_1,x^0_2,\ldots, y^{\vec{\alpha}}_{i-1})\Big)(x^{\vec{\alpha}}_i-y^{\vec{\alpha}}_i)^2\bigg| \\
& \times & \frac{1}{2}\Big|\ln \big[K+ \sum_{i=3}^n \alpha^2_i\rho^2_i(x^0_1,x^0_2,\ldots, y^{\vec{\alpha}}_{i-1}) (x^{\vec{\alpha}}_i-y^{\vec{\alpha}}_i)^2\big]\Big| \\
& + & \Bigg(K+
 \sum_{i=3}^n\bigg(\alpha^2_i\rho^2_i(x^0_1,x^0_2,\ldots, y^{\vec{\alpha}}_{i-1})(x^{\vec{\alpha}}_i-y^{\vec{\alpha}}_i)^2\bigg)\Bigg) \\
& \times & \sum_{i=4}^n\alpha^2_i\bigg(\rho_i(x^0_1,x^0_2,\ldots, x^{\vec{\alpha}}_{i-1})-
 \rho_i(x^0_1,x^0_2,\ldots, y^{\vec{\alpha}}_{i-1})\bigg)^2 (x^{\vec{\alpha}}_i-y^{\vec{\alpha}}_{i-1})^2\\ 
 & \times & \frac{1}{2}\Big|\ln \big[K+ \sum_{i=3}^n \alpha^2_i\rho^2_i(x^0_1,x^0_2,\ldots, y^{\vec{\alpha}}_{i-1}) (x^{\vec{\alpha}}_i-y^{\vec{\alpha}}_i)^2\big]\Big| \\
 & + & \bigg(K+\sum_{i=3}^n\Big(\alpha^2_i\rho^2_i(x^0_1,x^0_2,\ldots, y^{\vec{\alpha}}_{i-1})(x^{\vec{\alpha}}_i-y^{\vec{\alpha}}_i)^2\Big)\bigg)^{\frac{3}{2}} \\
 & \times & \bigg(\sum_{i=1}^n  \Big(\rho_i(x^0_1,x^0_2,\ldots, x^{\vec{\alpha}}_{i-1})\frac{\partial \tp(x)}{\partial x_i}|_{(x^0_1,x^0_2,\ldots, x^{\vec{\alpha}}_i)}\\ 
& & \hspace{.5in} \mbox{}-\rho_i(x^0_1,x^0_2,\ldots, y^{\vec{\alpha}}_{i-1})\frac{\partial \tp(y)}{\partial y_i}|_{(x^0_1,x^0_2,\ldots, y^{\vec{\alpha}}_i)}\Big)^2\bigg)^{\frac{1}{2}} \\
& \times & \frac{1}{2}\Big|\ln \big[K+ \sum_{i=3}^n \alpha^2_i\rho^2_i(x^0_1,x^0_2,\ldots, y^{\vec{\alpha}}_{i-1}) (x^{\vec{\alpha}}_i-y^{\vec{\alpha}}_i)^2\big]\Big|.
\end{eqnarray*}
 We iterate this process until we arrive at 
 \begin{eqnarray*}
 \lefteqn{0\leq\mathcal{T}_{n-1}=\lim_{\alpha_{n-1}\to\infty}\lim_{\alpha_{n-2}\to\infty}\cdots\lim_{\alpha_1\to\infty}\mathcal{T}} & & \\
 & \leq & \Bigg(K+\bigg(\alpha^2_n\rho^2_n(x^0_1,x^0_2,\ldots, x^0_{n-1})(x^{\vec{\alpha}}_n-y^{\vec{\alpha}}_n)^2 \bigg)^{\frac{3}{2}}\Bigg)
 \times  \ln \Bigg(1+\frac{\D 0}{\D \varepsilon}\Bigg) \\
 & &\mbox{}+\bigg(K+\alpha^2_n\rho^2_n(x^0_1,x^0_2,\ldots, x^0_{n-1}) (x^{\vec{\alpha}}_n-y^{\vec{\alpha}}_n)^2\bigg)^{\frac{1}{2}}
\\ & \times &  \bigg|\alpha^2_n \Big(0\Big)(x^{\vec{\alpha}}_n-y^{\vec{\alpha}}_n)^2\bigg| \\
& \times & \frac{1}{2}\Big|\ln \big[K+ \alpha^2_n\rho^2_n(x^0_1,x^0_2,\ldots, x^0_{n-1}) (x^{\vec{\alpha}}_n-y^{\vec{\alpha}}_n)^2\big]\Big| \\
& + & \Bigg(K+\bigg(\alpha^2_n\rho^2_n(x^0_1,x^0_2,\ldots, x^0_{n-1})(x^{\vec{\alpha}}_n-y^{\vec{\alpha}}_n)^2\bigg)\Bigg) \\
& \times & \alpha^2_n\bigg(0\bigg)^2 (x^{\vec{\alpha}}_n-y^{\vec{\alpha}}_n)^2\\ 
 & \times & \frac{1}{2}\Big|\ln \big[K+ \alpha^2_n\rho^2_n(x^0_1,x^0_2,\ldots, x^0_{n-1}) (x^{\vec{\alpha}}_n-y^{\vec{\alpha}}_n)^2\big]\Big| \\
 & + & \bigg(K+\Big(\alpha^2_n\rho^2_i(x^0_1,x^0_2,\ldots, x^0_{n-1})(x^{\vec{\alpha}}_n-y^{\vec{\alpha}}_n)^2\Big)\bigg)^{\frac{3}{2}} \\
 & \times & \bigg(\sum_{i=1}^n  \Big(\rho_i(x^0_1,x^0_2,\ldots, x^0_{n-1})\frac{\partial \tp(x)}{\partial x_i}|_{(x^0_1,x^0_2,\ldots, x^0_i)}\\ 
& & \hspace{.5in} \mbox{}-\rho_i(x^0_1,x^0_2,\ldots, x^0_{n-1})\frac{\partial \tp(y)}{\partial y_i}|_{(x^0_1,x^0_2,\ldots, x^0_n)}\Big)^2\bigg)^{\frac{1}{2}} \\
& \times & \frac{1}{2}\Big|\ln \big[K+ \alpha^2_n\rho^2_n(x^0_1,x^0_2,\ldots, x^0_{n-1}) (x^{\vec{\alpha}}_n-y^{\vec{\alpha}}_n)^2\big]\Big| \\
& = & 0+0+0+0.
\end{eqnarray*} We then conclude 
 $$\lim_{\alpha_n\to\infty}\lim_{\alpha_{n-1}\to\infty}\cdots\lim_{\alpha_2\to\infty}\lim_{\alpha_1\to\infty}\mathcal{T}=0.$$
 
\noindent\textbf{Case 2:} $$\Big|\ln \|\Upsilon_{y^{\vec{\alpha}}}\|\Big|\leq\Big|\ln \varepsilon\Big|\ \textmd{\ and\ }\ \ln \frac{\|\Upsilon_{x^{\vec{\alpha}}}\|^2}{\|\Upsilon_{y^{\vec{\alpha}}}\|^2}>0.$$
We then have 
\begin{eqnarray*}
\lefteqn{0\leq\mathcal{T}\leq\|\Upsilon_{x^{\vec{\alpha}}}\|^{3}
\bigg|\ln \frac{\|\Upsilon_{x^{\vec{\alpha}}}\|^2}{\|\Upsilon_{y^{\vec{\alpha}}}\|^2}\bigg|  +  \|\Upsilon_{x^{\vec{\alpha}}}\| \bigg|\|\Upsilon_{x^{\vec{\alpha}}}\|^{2}- \|\Upsilon_{y^{\vec{\alpha}}}\|^{2}\bigg|  \Big|\ln \varepsilon\Big|}& &  \\
 & + & \|\Upsilon_{y^{\vec{\alpha}}}\|^{2}\|\Upsilon_{x^{\vec{\alpha}}}-\Upsilon_{y^{\vec{\alpha}}}\|\Big|\ln \varepsilon\Big|\\ &  + &  
\|\Upsilon_{y^{\vec{\alpha}}}\|^{3}\|\nabla_0\ln\tp(x^{\vec{\alpha}})-\nabla_0\ln\tp(y^{\vec{\alpha}})\|\Big|\ln \varepsilon\Big|.
\end{eqnarray*}
We then proceed as in Case 1.
 
\noindent\textbf{Case 3:} $$\Big|\ln \|\Upsilon_{y^{\vec{\alpha}}}\|\Big|>\Big|\ln \varepsilon\Big|\ \textmd{\ and\ }\ \ln \frac{\|\Upsilon_{x^{\vec{\alpha}}}\|^2}{\|\Upsilon_{y^{\vec{\alpha}}}\|^2}<0.$$
From these hypotheses, we have 
\begin{eqnarray*}
\lefteqn{\mathcal{T}=\|\Upsilon_{x^{\vec{\alpha}}}\|^{3}
\ln \frac{\|\Upsilon_{y^{\vec{\alpha}}}\|^2}{\|\Upsilon_{x^{\vec{\alpha}}}\|^2}  +  \|\Upsilon_{x^{\vec{\alpha}}}\| \bigg|\|\Upsilon_{x^{\vec{\alpha}}}\|^{2}- \|\Upsilon_{y^{\vec{\alpha}}}\|^{2}\bigg|  \Big|\ln \|\Upsilon_{y^{\vec{\alpha}}}\|\Big|}& &  \\
 & + & \|\Upsilon_{y^{\vec{\alpha}}}\|^{2}\|\Upsilon_{x^{\vec{\alpha}}}-\Upsilon_{y^{\vec{\alpha}}}\|\Big|\ln \|\Upsilon_{y^{\vec{\alpha}}}\|\Big|\\ &  + &  
\|\Upsilon_{y^{\vec{\alpha}}}\|^{3}\|\nabla_0\ln\tp(x^{\vec{\alpha}})-\nabla_0\ln\tp(y^{\vec{\alpha}})\|\Big|\ln \|\Upsilon_{y^{\vec{\alpha}}}\|\Big|.
\end{eqnarray*}
We will use the following Lemma.
\begin{lemma}
Given that $\|\Upsilon_{y^{\vec{\alpha}}}\|^2>\varepsilon$, we have
$$\|\Upsilon_{x^{\vec{\alpha}}}\|^2 \geq \frac{\varepsilon}{2}.$$
\end{lemma}
\begin{proof}
Suppose not. Then $$\|\Upsilon_{x^{\vec{\alpha}}}\|^2 < \frac{\varepsilon}{2}.$$
We then have
\begin{equation*}
0<\frac{\varepsilon}{2}\leq\|\Upsilon_{y^{\vec{\alpha}}}\|^2-\|\Upsilon_{x^{\vec{\alpha}}}\|^2
\end{equation*}
Taking iterated limits of this inequality contradicts Equation \eqref{vectorlimit}.
\end{proof}
Using this lemma, we have 
\begin{eqnarray*}
\lefteqn{0\leq\mathcal{T}\leq\|\Upsilon_{x^{\vec{\alpha}}}\|^{3}
\ln \Bigg(1+\frac{\|\Upsilon_{y^{\vec{\alpha}}}\|^2-\|\Upsilon_{x^{\vec{\alpha}}}\|^2} {\frac{\varepsilon}{2}}\Bigg)} & &\\  &  + &  \|\Upsilon_{x^{\vec{\alpha}}}\| \bigg|\|\Upsilon_{x^{\vec{\alpha}}}\|^{2}- \|\Upsilon_{y^{\vec{\alpha}}}\|^{2}\bigg|  \Big|\ln \|\Upsilon_{y^{\vec{\alpha}}}\|\Big| \\
& + & \|\Upsilon_{y^{\vec{\alpha}}}\|^{2}\|\Upsilon_{x^{\vec{\alpha}}}-\Upsilon_{y^{\vec{\alpha}}}\|\Big|\ln \|\Upsilon_{y^{\vec{\alpha}}}\|\Big|\\ &  + &  
\|\Upsilon_{y^{\vec{\alpha}}}\|^{3}\|\nabla_0\ln\tp(x^{\vec{\alpha}})-\nabla_0\ln\tp(y^{\vec{\alpha}})\|\Big|\ln \|\Upsilon_{y^{\vec{\alpha}}}\|\Big|.
\end{eqnarray*}
This case then proceeds as in Case 1. 

\noindent\textbf{Case 4:} $$\Big|\ln \|\Upsilon_{y^{\vec{\alpha}}}\|\Big|\leq\Big|\ln \varepsilon\Big|\ \textmd{\ and\ }\ \ln \frac{\|\Upsilon_{x^{\vec{\alpha}}}\|^2}{\|\Upsilon_{y^{\vec{\alpha}}}\|^2}<0.$$
This case is similar to Case 3 (cf. Case 2) and omitted. 

Equation \eqref{contr} now produces a contradiction and the theorem then follows. 
\end{proof}
An analogous argument produces the following Corollary.
\begin{corollary}\label{compinfcor}
Let $v=u_\infty$ be the viscosity solution from Theorem \ref{exist} to 
\begin{equation}\label{maxequ}
\max \{\varepsilon-\|\nabla_0 u\|^2, - \Delta_{\infty(x)} u \} =0
\end{equation}
in a bounded domain $\Omega$. If $u$ is an lower semi-continuous viscosity supersolution to Equation \eqref{maxequ} in $\Omega$ so that $u\geq v$ on $\partial \Omega$, then $u\geq v$ in $\Omega$. 
\end{corollary}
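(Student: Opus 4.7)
The plan is to mimic the proof of Theorem \ref{compinf}, with the roles of sub- and supersolution interchanged and the signs adjusted for \eqref{maxequ}. First, by the standard perturbation argument of \cite[Lemma 3.1]{LL} and \cite[Theorem 5.3]{B:HG}, reduce to the case where $v$ is a \emph{strict} viscosity subsolution of \eqref{maxequ}, so that at each $(\eta,\mathcal{X})\in\overline{J}^{2,+}v$ both $\varepsilon-\|\eta\|^2\leq -\mu$ and $-\ip{\mathcal{X}\eta}{\eta}-\|\eta\|^{2}\ip{\eta}{\nabla_0\ln\tp}\ln\|\eta\|\leq -\mu$ for a fixed $\mu>0$. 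Assume for contradiction that $\sup_\Omega(v-u)>0$. Apply the Iterated Maximum Principle (Lemma \ref{malpha}) to the pair $v,u$ with the usual quadratic penalty $\varphi_{\vec{\alpha}}$, and use \cite[Theorem 3.2]{B:MP} to obtain vectors $\Upsilon_{x^{\vec{\alpha}}},\Upsilon_{y^{\vec{\alpha}}}$ (given by \eqref{jetvecdef}) and symmetric matrices $\mathcal{X}^{\vec{\alpha}},\mathcal{Y}^{\vec{\alpha}}$ satisfying
\[
(\Upsilon_{x^{\vec{\alpha}}},\mathcal{X}^{\vec{\alpha}})\in\overline{J}^{2,+}v(x^{\vec{\alpha}}),\qquad (\Upsilon_{y^{\vec{\alpha}}},\mathcal{Y}^{\vec{\alpha}})\in\overline{J}^{2,-}u(y^{\vec{\alpha}}).
\]

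Since the strict subsolution property forces both components of the max to be $\leq-\mu$ at $x^{\vec{\alpha}}$, while the supersolution property forces at least one component to be $\geq 0$ at $y^{\vec{\alpha}}$, a short termwise comparison yields
\[
0<\mu\leq \max\Big\{\|\Upsilon_{x^{\vec{\alpha}}}\|^2-\|\Upsilon_{y^{\vec{\alpha}}}\|^2,\ \ip{\mathcal{X}^{\vec{\alpha}}\Upsilon_{x^{\vec{\alpha}}}}{\Upsilon_{x^{\vec{\alpha}}}}-\ip{\mathcal{Y}^{\vec{\alpha}}\Upsilon_{y^{\vec{\alpha}}}}{\Upsilon_{y^{\vec{\alpha}}}}+\Xi^{\vec{\alpha}}\Big\},
\]
where $\Xi^{\vec{\alpha}}$ denotes the difference of logarithmic $\tp(x)$-terms appearing in Theorem \ref{compinf}. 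The first entry has iterated limit zero by \eqref{vectorlimit} (up to a sign), and the second-order quadratic difference has iterated limit zero by \cite[Section 3]{B:GC}. It remains to force $\Xi^{\vec{\alpha}}\to 0$ in the iterated limit.

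Here the strict subsolution property supplies $\|\Upsilon_{x^{\vec{\alpha}}}\|^2\geq\varepsilon+\mu>\varepsilon$, taking on the role played by $\|\Upsilon_{y^{\vec{\alpha}}}\|^2>\varepsilon$ in Theorem \ref{compinf}. The same suppose-not argument as in the embedded lemma yields the analogue $\|\Upsilon_{y^{\vec{\alpha}}}\|^2\geq\varepsilon/2$ locally (if not, $\|\Upsilon_{x^{\vec{\alpha}}}\|^2-\|\Upsilon_{y^{\vec{\alpha}}}\|^2\geq\varepsilon/2+\mu$ in the iterated limit, contradicting \eqref{vectorlimit}), keeping both vectors bounded away from zero. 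The four-case decomposition on the signs of $\ln(\|\Upsilon_{x^{\vec{\alpha}}}\|^2/\|\Upsilon_{y^{\vec{\alpha}}}\|^2)$ and the comparison of $|\ln\|\Upsilon_{y^{\vec{\alpha}}}\||$ with $|\ln\varepsilon|$ then runs essentially verbatim, using Corollary \ref{lipcor} and Remark \ref{remarkcomp} to drive $\Xi^{\vec{\alpha}}$ to zero under $\alpha_1\to\infty,\dots,\alpha_n\to\infty$. The contradiction with $\mu>0$ forces $\sup_\Omega(v-u)\leq 0$, i.e.\ $u\geq v$ in $\Omega$. The principal obstacle I anticipate is purely bookkeeping: verifying that each invocation of $\|\Upsilon_{y^{\vec{\alpha}}}\|^2>\varepsilon$ in the original four-case analysis has a correct analogue after swapping $x^{\vec{\alpha}}\leftrightarrow y^{\vec{\alpha}}$ and adjusting signs, but no new technique is required.
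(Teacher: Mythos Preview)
Your proposal is correct and follows exactly the approach the paper intends: the paper proves Corollary \ref{compinfcor} simply by stating that ``an analogous argument produces the following Corollary,'' i.e., by rerunning the proof of Theorem \ref{compinf} with the sub/super roles of $u$ and $v$ interchanged and the signs adjusted for the $\max$ equation. Your outline captures precisely this, including the correct analogue of the embedded lemma (the lower bound $\|\Upsilon_{x^{\vec{\alpha}}}\|^2>\varepsilon$ from the strict subsolution property and the contradiction argument giving $\|\Upsilon_{y^{\vec{\alpha}}}\|^2\geq \varepsilon/2$), and you correctly flag that the only work is bookkeeping in swapping $x^{\vec{\alpha}}\leftrightarrow y^{\vec{\alpha}}$ throughout the four cases.
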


\section{A Harnack Inequality}
We include a Harnack inequality for completeness. First, we have the following lemma whose proof is identical to \cite[Lemma 4.1]{LL} and omitted. 
\begin{lemma}
Let $u$ be a positive viscosity $\infty(x)$-harmonic function and $\zeta$ a positive, compactly supported smooth function. Then 
$$\sup_{x\in\Omega}\bigg|\nabla_0\zeta(x)\nabla_0\ln u(x)\bigg|^{\tp(x)} \leq \sup_{x_\in\Omega}\bigg|\nabla_0\zeta(x)+\zeta(x)\ln\bigg(\frac{\zeta(x)}{u(x)}\bigg)\nabla_0\ln \tp(x)\bigg|^{\tp(x)}.$$
\end{lemma}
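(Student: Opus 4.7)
The plan is to derive the estimate by a Moser-type integral argument on the $k\tp(x)$-harmonic approximants $u_k$ and then pass to the limit $k\to\infty$. By Theorem~\ref{exist}, the positive $\infty(x)$-harmonic function $u$ is the uniform limit on $\Omega$ of positive weak solutions $u_k\in W^{1,k\tp(x)}(\Omega)$ of the $k\tp(x)$-Laplace equation; for each fixed $k$ the weak formulation gives a legitimate test-function technique that is not directly available at the $\infty(x)$ level.

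The first step is to plug the Moser-type test function $\eta = \zeta^{k\tp(x)}/u_k^{k\tp(x)-1}$ (whose compact support is inherited from $\zeta$) into the weak formulation
\[
\int_\Omega \|\nabla_0 u_k\|^{k\tp(x)-2}\langle \nabla_0 u_k,\nabla_0 \eta\rangle\,dx = 0.
\]
Applying the product and chain rules to $\eta$ produces three kinds of terms: a principal one that reorganises into $\zeta^{k\tp(x)}\|\nabla_0\ln u_k\|^{k\tp(x)}$, a cross-term involving $\zeta^{k\tp(x)-1}\nabla_0\zeta$, and—because of the $x$-dependence of the exponent—a logarithmic term of the form $\zeta^{k\tp(x)}\ln(\zeta/u_k)\nabla_0(k\tp(x))$. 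Using Young's inequality to absorb the $\nabla_0\ln u_k$-factor from the mixed terms, one arrives at an integral Caccioppoli inequality of the shape
\[
\int_\Omega \zeta^{k\tp(x)}\|\nabla_0\ln u_k\|^{k\tp(x)}\,dx \leq \int_\Omega \bigg\|\nabla_0\zeta + \zeta\ln\bigg(\frac{\zeta}{u_k}\bigg)\nabla_0\ln\tp(x)\bigg\|^{k\tp(x)}\,dx.
\]

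The second step is to raise both sides to the power $1/(k\tp(x))$ and let $k\to\infty$. Combining the uniform convergence $u_k\to u$ from Theorem~\ref{exist} with the standard fact that $L^{k\tp(x)}$-means over a bounded set converge to the $L^\infty$ supremum as $k\to\infty$, the integral inequality converts into the pointwise supremum inequality of the lemma. The compact support of $\zeta$ confines the analysis to a region where $u$ stays bounded away from both $0$ and $\infty$, keeping the logarithmic factor $\ln(\zeta/u_k)$ under control during the limit.

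The main obstacle will be verifying two technical points in the Grushin environment: first, that $\eta$ is admissible in the Grushin-Sobolev space $W^{1,k\tp(x)}_0(\Omega)$ with its $x$-dependent exponent (so that the weak formulation really applies), and second, that the $L^{k\tp(x)}\to L^\infty$ passage preserves the direction of the inequality with the correct $\tp(x)$-weighting retained on both sides. Both difficulties are handled in the Euclidean case in \cite[Lemma 4.1]{LL}; because the argument relies only on horizontal derivatives, compactly supported test functions, and the intrinsic Grushin weak formulation—with no appeal to special Euclidean structure—the proof transfers verbatim to $G_n$.
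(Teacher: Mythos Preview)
Your proposal is correct and follows exactly the approach the paper intends: the paper's own proof is omitted and simply declared ``identical to \cite[Lemma 4.1]{LL},'' which is precisely the Moser test-function argument on the $k\tp(x)$-approximants followed by the $L^{k\tp(x)}\to L^\infty$ passage that you outline. Your remark that nothing in the argument uses Euclidean structure beyond horizontal derivatives and the intrinsic weak formulation is exactly the justification the paper is implicitly invoking.
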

As in \cite[Section 4]{LL}, we have the following Harnack inequality as a consequence. 
\begin{thm}
Let $u$ be a positive viscosity $\infty(x)$-harmonic function. Let $B_r$ be a ball of radius $r>0$ contained in the bounded domain $\Omega$. Let $B_{2r}$ be the concentric ball of twice the radius also contained in $\Omega$. Then 
$$\sup_{x\in B_r} u(x) \leq C \big(\inf_{x\in B_r} u(x)+r\big)$$ for some constant $C$ 	depending on $\sup_{x\in B_{2r}}u(x)$. 
\end{thm}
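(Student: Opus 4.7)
The plan is to follow the argument of \cite[Section 4]{LL}, using the preceding lemma as a Caccioppoli-type estimate that governs the horizontal logarithmic gradient of $u$. The starting point is to fix a smooth cutoff $\zeta$ supported in $B_{2r}$, comparable to $1$ on a neighborhood of $B_r$, with a horizontal gradient estimate of the form $|\nabla_0 \zeta| \leq C/r$ adapted to the Grushin geometry (obtained by regularizing a function of the Carnot-Carathéodory distance via \eqref{distest}). Substituting this $\zeta$ into the lemma and exploiting $1<\tp(x) \in C^1 \cap W^{1,\infty}$ (so that $\nabla_0 \ln \tp(x)$ is bounded) together with the positivity of $u$ (so that $\ln(\zeta/u)$ is controlled in terms of $\sup_{B_{2r}} u$), one obtains a uniform bound of the form $|\nabla_0 \ln u(x)| \leq L$ on a neighborhood of $B_r$, where $L$ depends on $r$, $\sup_{B_{2r}} u$, and the $C^1\cap W^{1,\infty}$ norm of $\tp$.

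From this logarithmic gradient bound I would pass to pointwise control of $u$ itself. Since $\nabla_0 u = u\,\nabla_0 \ln u$ and $u \leq \sup_{B_{2r}} u$ on $B_r$, the horizontal gradient of $u$ is bounded on $B_r$ by a constant $M$ depending on $\sup_{B_{2r}} u$. By Chow's theorem, any two points $x,y \in B_r$ are joined by a horizontal curve $\gamma$ of length at most $2r$, and integrating $\nabla_0 u$ along $\gamma$ gives
$$|u(x)-u(y)| \leq M\,d_C(x,y) \leq 2Mr.$$
Specializing to points where $u$ approaches $\sup_{B_r} u$ and $\inf_{B_r} u$ respectively yields
$$\sup_{B_r} u \leq \inf_{B_r} u + 2Mr \leq C\bigl(\inf_{B_r} u + r\bigr),$$
with $C$ depending on $\sup_{B_{2r}} u$.

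The main obstacle lies in the first step: the lemma bounds $|\nabla_0\zeta \cdot \nabla_0 \ln u|$, not $|\nabla_0 \ln u|$ alone, and one must arrange the argument so that a genuine pointwise gradient bound is extracted. Following \cite{LL}, this is done either by chaining the estimate over a family of overlapping cutoffs whose gradient supports cover $B_r$, or by localizing at an interior maximum of $|\nabla_0 \ln u|$ where $\zeta$ can be arranged to have nonvanishing horizontal gradient. A secondary but purely technical issue is the construction of the smooth Grushin cutoff with the claimed gradient bound; this follows from the same local estimate \eqref{distest} already used throughout the paper, so no new ideas are needed there.
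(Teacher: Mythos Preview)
Your approach is essentially the one the paper indicates: the paper gives no proof beyond the sentence ``as in \cite[Section~4]{LL}'', and you are proposing to carry out that same argument. One point worth correcting: the left-hand side of the preceding lemma should be read as $\bigl|\zeta(x)\,\nabla_0\ln u(x)\bigr|^{\tp(x)}$ (scalar cutoff times gradient vector), matching \cite[Lemma~4.1]{LL} and consistent with the vector norm on the right-hand side. With that reading your ``main obstacle'' evaporates: taking $\zeta\equiv 1$ on a neighborhood of $B_r$ yields a direct pointwise bound on $|\nabla_0\ln u|$ there, and no chaining of cutoffs or localization at an interior maximum is needed.
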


\end{document}